\def\id{\textrm{Id}}
\def\RR{\mathbb{R}}
\def\vphi{\varphi}
\def\eps{\varepsilon}
\newcommand{\D}{\ensuremath{\mathbb{D}}} 
\newcommand{\R}{\ensuremath{\mathbb{R}}} 
\newcommand{\C}{\ensuremath{\mathbb{C}}} 
\newcommand{\longto}{\longrightarrow}
\newtheorem{theo}{Theorem}
\newtheorem{prop}[theo]{Proposition}
\newtheorem{notation}[theo]{Notation}
\newtheorem{fact}[theo]{Fact}
\newtheorem{conj}[theo]{Conjecture}
\newtheorem{defi}{Definition}
\def\D{\displaystyle}
\def\longto{\longrightarrow}
\def\D{\displaystyle}
\newcommand{\be}{\begin{equation}}
\newcommand{\ee}{\end{equation}}
\def\benu{\begin{enumerate}}
\def\eenu{\end{enumerate}}
\def\ov#1{\overline{#1}}
\def\Hess{{\rm Hess}}
\def\L{\mathcal L}
\def\myffrac#1#2 in #3{\raise 2.6pt\hbox{$#3 #1$}\mkern-1.5mu\raise 0.8pt\hbox{$
#3/$}\mkern-1.1mu\lower 1.5pt\hbox{$#3 #2$}}
\newcommand{\ffrac}[2]{\mathchoice%
    {\myffrac{#1}{#2} in \scriptstyle}
    {\myffrac{#1}{#2} in \scriptstyle}
    {\myffrac{#1}{#2} in \scriptscriptstyle}
    {\myffrac{#1}{#2} in \scriptscriptstyle}
}
\begin{document}

\title{Interpolations, convexity and geometric inequalities}

\author{D. Cordero-Erausquin\textsuperscript{1} and B. Klartag\textsuperscript{2}}

\date{}

\footnotetext[1]{Institut de Math\'ematiques de Jussieu,
Universit\'e Pierre et Marie Curie (Paris 6), 4 place Jussieu, 75252
Paris, France. Email: cordero@math.jussieu.fr.}

\footnotetext[2]{School of Mathematical Sciences, Tel-Aviv
University, Tel Aviv 69978, Israel. Supported in part by the Israel
Science Foundation and by a Marie Curie Reintegration Grant from the
Commission of the European Communities. Email: klartagb@tau.ac.il}

\maketitle

\begin{abstract}
We survey some interplays between spectral estimates of
H\"ormander-type,   degenerate Monge-Amp\`ere equations and
geometric inequalities related to log-concavity such as
Brunn-Minkowski, Santal\'o or Busemann inequalities.
\end{abstract}

\section{Introduction}

The Brunn-Minkowski inequality has an $L^2$ interpretation, an
observation that can be traced back to the proof provided by
Hilbert. More recently, it has been noted that the Brunn-Minkowski
inequality for convex bodies is related, in its local form, to
spectral inequalities. In fact, the Pr\'ekopa theorem, which is the
function form of the Brunn-Minkowski inequality for convex sets, is
{\it equivalent} to spectral inequalities of Brascam-Lieb type. The
local derivation of Pr\'ekopa's theorem from spectral $L^2$
inequalities was described in the more general complex setting
in~\cite{CE05} and then extended further
in~\cite{Berndtsson:2008p480, Berndtsson:2009p2332}.

Let $K_0,K_1\subset \R^n$ be two convex bodies (i.e., compact convex
sets with non-empty interior) and denote, for $t\in [0,1]$,
\begin{equation} K(t) := (1-t)K_0 +t K_1= \{z\in \R^n\; ; \ \exists
(a,b)\in K_0\times K_1 , \; z=(1-t)a + tb\}. \label{eq_355}
\end{equation} The Brunn-Minkowski inequality is central in the theory of
convex bodies. Denoting the Lebesgue measure by $|\cdot|$, it states
that
$$|K(t)|\ge |K_0|^{1-t} \, |K_1|^t,$$
with equality if and only if $K_0=K_1 + x_0$ for $x_0\in \R^n$.
Introducing the convex body
$$K:= \bigcup_{t\in [0,1]} \{t\}\times K(t) \subset \R^{n+1},$$
then $K(t)$ is the section over $t$, and the Brunn-Minkowski
inequality expresses the log-concavity of the marginal measure.
Namely, it shows that the function
$$\alpha(t):=-\log|K(t)|$$
is convex.  The Brunn-Minkowski inequality for convex bodies admits
the following  useful functional form, which states that marginals
of log-concave functions are log-concave.

\begin{theo}[Pr\'ekopa]\label{prekopa}
Let $F:\R^{n+1} \to \R\cup\{+\infty\}$ be convex with $\int \exp(-F)
< \infty$ and define $\alpha:\R \longto \R\cup\{+\infty\}$ by
$$e^{-\alpha(t)}= \int_{\R^n} e^{-F(t,x)}\, dx .$$
Then $\alpha$ is convex.
\end{theo}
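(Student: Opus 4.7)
My plan is to prove this via the Brascamp--Lieb spectral inequality, which is the local $L^2$ form of Pr\'ekopa's theorem alluded to in the introduction. By a standard regularization argument, one replaces $F$ by $F_\eps(t,x) := F(t,x) + \eps(t^2+|x|^2)$ and convolves with a smooth mollifier, reducing to the case where $F$ is smooth and strictly convex on all of $\R^{n+1}$, with enough decay at infinity to permit differentiation under the integral. Since convexity of real-valued functions is preserved under pointwise limits, it will suffice to prove the convexity of $\alpha_\eps$ defined from $F_\eps$, and then to let $\eps \to 0$.

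Assuming now that $F$ is smooth and strictly convex, fix $t$ in the interior of $\{\alpha<\infty\}$ and introduce the log-concave probability density $\rho_t(x) := e^{-F(t,x) + \alpha(t)}$ on $\R^n$. Differentiation under the integral sign gives
\begin{equation*}
\alpha'(t) \;=\; \int_{\R^n} \partial_t F(t,x)\, \rho_t(x)\, dx,
\end{equation*}
and a further differentiation, using the identity $\partial_t \rho_t = (\alpha'(t) - \partial_t F)\, \rho_t$, yields after a short computation
\begin{equation*}
\alpha''(t) \;=\; \int_{\R^n} \partial_t^2 F(t,x)\, \rho_t(x)\, dx \;-\; \mathrm{Var}_{\rho_t}\!\bigl(\partial_t F(t,\cdot)\bigr).
\end{equation*}
So the convexity of $\alpha$ reduces, at each fixed $t$, to a variance estimate on the log-concave measure $\rho_t$.

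At this point I invoke the Brascamp--Lieb inequality: for the strictly log-concave density $\rho_t = e^{-V_t}$ with $V_t(x) := F(t,x) - \alpha(t)$, and for any smooth test function $g$,
\begin{equation*}
\mathrm{Var}_{\rho_t}(g) \;\le\; \int_{\R^n} \bigl\langle (\Hess_x F)^{-1} \nabla_x g,\, \nabla_x g \bigr\rangle\, \rho_t\, dx.
\end{equation*}
Applying this with $g(x) := \partial_t F(t,x)$ and substituting into the formula above, we see $\alpha''(t) \ge 0$ provided that at every point
\begin{equation*}
\partial_t^2 F - \bigl\langle (\Hess_x F)^{-1} \nabla_x \partial_t F,\, \nabla_x \partial_t F \bigr\rangle \;\ge\; 0.
\end{equation*}
But this is precisely the Schur complement of the full Hessian $\Hess F$ on $\R^{n+1}$ with respect to the invertible block $\Hess_x F$, and its non-negativity is equivalent to $\Hess F \succeq 0$, which holds because $F$ is convex on $\R^{n+1}$.

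The main obstacle I anticipate is the regularization step: one must check that after adding $\eps(t^2+|x|^2)$ and mollifying, the integrability hypothesis $\int e^{-F}<\infty$ is preserved uniformly enough that $\alpha_\eps \to \alpha$ pointwise on the relative interior of $\{\alpha<\infty\}$, and that differentiation under the integral can be justified in a neighborhood of each such $t$. These verifications are routine but tedious; once in place, the combination of Brascamp--Lieb with the Schur complement identity delivers the convexity of $\alpha$ with no further work.
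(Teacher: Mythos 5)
Your proposal is correct and follows essentially the same route as the paper: compute $\alpha''$ via the variance formula, apply Brascamp--Lieb to $u=\partial_t F$, and observe that the resulting pointwise inequality $\partial^2_{tt}F \geq (\Hess_x F)^{-1}\nabla_x\partial_t F\cdot\nabla_x\partial_t F$ is exactly the Schur-complement characterization of $\Hess F\succeq 0$ (which the paper phrases as the equivalence between convexity on $I\times\R^n$ and being a sub-family of $1$-interpolation). The only difference is that you spell out the regularization step more carefully, where the paper simply assumes enough smoothness.
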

The Brunn-Minkowski inequality then follows by considering, for a
given convex set $K\subset \R^{n+1}=\R\times \R^n$, the convex
function $F$ defined by
\begin{equation}\label{indicatrix}
e^{-F(t,x)}= \mathbf 1_{K}(t,x) = \mathbf 1_{K(t)}(x) .
\end{equation}

The standard proofs of Brunn-Minkowski rely on parameterization or
mass transport techniques between $K_0$ and $K_1$, with the
parameter $t\in [0,1]$ being fixed. A natural question is whether
one can provide a direct local approach by proving $\alpha''(t) \ge
0$? The answer is affirmative and this was shown recently by Ball,
Barthe and Naor \cite{Ball:2003p54}. As mentioned earlier, this
local approach was put forward in an $L^2$ framework, for analogous
complex versions, in Cordero-Erausquin~\cite{CE05} and in subsequent
far-reaching works by Berndtsson~\cite{Berndtsson:2008p480,
Berndtsson:2009p2332}.

Another essential concept in the theory of convex bodies is duality.
This requires us to fix a center and a scalar product. Let $x \cdot
y$ stand for the standard scalar product of $x, y \in \R^n$. We
write $|x|^2 = x \cdot x$ and $B_2^n=\{x\in \R^n\; ;\ x\cdot x\le
1\}$, the associated unit ball. Recall that $K \subset \R^n$ is a
centrally-symmetric convex body if and only if $K$ is the unit ball
for some norm $\|\cdot\|$ on $\R^n$, a relation denoted by
$K=B_{\|\cdot\|}:=\{x\in \R^n\; ; \ \|x\|\le 1\}$. The polar of $K$
is defined as the unit ball of the dual norm $\|\cdot\|_\ast$,
$$K^\circ = B_{\|\cdot\|_\ast}= \{y\in \R^n\; ; \ x\cdot y \le 1, \; \forall x \in K\}.$$
We have the following beautiful result:

\begin{theo}[Blaschke-Santal\'o inequality]
For every centrally-symmetric convex body $K\subset \R^n$, we have
$$|K|\, |K^\circ| \le |B_2^n|^2$$
with equality holding true if and only if $K$ is an ellipsoid (i.e.
a linear image of $B_2^n$).
\end{theo}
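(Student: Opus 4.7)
The plan is to follow Blaschke's classical Steiner-symmetrization approach, which interacts particularly well with central symmetry. For a unit vector $u \in \R^n$, let $S_u K$ denote the Steiner symmetrization of $K$ with respect to the hyperplane $u^\perp$: each chord $K \cap (x + \R u)$, with $x \in u^\perp$, is replaced by the chord of the same length centered on $u^\perp$. Standard arguments give that $S_u K$ is convex, contains the origin in its interior, and satisfies $|S_u K| = |K|$.

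The core step is the Santal\'o-type monotonicity
$$ |K|\,|K^\circ| \;\leq\; |S_u K|\,|(S_u K)^\circ|. $$
Since $|S_u(K^\circ)|=|K^\circ|$, this reduces to the set inclusion $S_u(K^\circ) \subseteq (S_u K)^\circ$. I would prove it by a one-dimensional computation: writing the chord $K\cap(x+\R u) = \{x + tu : \alpha(x) \leq t \leq \beta(x)\}$ with $\alpha$ convex and $\beta$ concave, central symmetry of $K$ forces $\alpha(-x) = -\beta(x)$. Unfolding the polarity condition $y \cdot z \leq 1$ for all $z \in K$ expresses each chord $K^\circ \cap (y + \R u)$ in terms of $\alpha$ and $\beta$; a direct manipulation using convexity shows that the relevant chord length in $(S_u K)^\circ$ dominates the corresponding chord length in $S_u(K^\circ)$, yielding the inclusion.

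With this monotonicity in hand, I would invoke Gross's classical convergence theorem: there exist directions $(u_k)_{k \geq 1}$ such that $S_{u_k} \cdots S_{u_1} K$ converges, in the Hausdorff metric, to the Euclidean ball $rB_2^n$ with $|rB_2^n|=|K|$. Since $K \mapsto |K|$ and $K \mapsto |K^\circ|$ are both continuous on the space of centrally-symmetric convex bodies with positive inradius, the inequality passes to the limit:
$$ |K|\,|K^\circ| \;\leq\; |rB_2^n|\,|(rB_2^n)^\circ| \;=\; |B_2^n|^2. $$

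The main technical obstacle is the one-dimensional polarity inclusion: central symmetry is essential here, since without it the analogous statement fails and one must first translate $K$ to a carefully chosen center (the Santal\'o point), making the elementary argument much more delicate. For the equality case, one tracks when $S_u(K^\circ) \subseteq (S_u K)^\circ$ holds with equality: this forces $K$ to be invariant under $S_u$ for every direction $u$, which classically characterizes ellipsoids centered at the origin.
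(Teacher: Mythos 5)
The paper does not actually give a proof of the real Blaschke--Santal\'o inequality: it is stated as a classical fact, and the paper's own machinery (complex interpolation, degenerate Monge--Amp\`ere, Brascamp--Lieb) proves the complex analogue via $X_{1/2}=\ell_2^n(\C)$ and only reaches a partial result in the real case, namely $\alpha''(1/2)\ge 0$ for the $2$-interpolation between $f$ and $\L f$. Your Steiner-symmetrization route is therefore a genuinely different, classical path (Blaschke, later Meyer--Pajor and Saint-Raymond), and in principle it does deliver the result.

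However, the key lemma as you state it is false. You reduce the monotonicity $|K^\circ|\le |(S_uK)^\circ|$ to the set inclusion $S_u(K^\circ)\subseteq (S_uK)^\circ$, and even to a chord-by-chord domination. Here is a counterexample. Let
$$K=\{(x,y)\in\R^2:\ |x|\le 1,\ |y-x|\le 1\},\qquad u=e_2.$$
Then $K$ is a centrally symmetric parallelogram, $S_{e_2}K=[-1,1]^2$, and $(S_{e_2}K)^\circ=\{(a,b):\ |a|+|b|\le 1\}$. On the other hand $K^\circ=\{(a,b):\ |a|\le 1,\ |a+2b|\le 1\}$ is a parallelogram all of whose vertical chords have length $1$, so $S_{e_2}(K^\circ)=[-1,1]\times[-1/2,1/2]$. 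The point $(1,1/2)$ lies in $S_{e_2}(K^\circ)$ but not in $(S_{e_2}K)^\circ$, so the inclusion fails; indeed at $a=1$ the chord of $(S_{e_2}K)^\circ$ degenerates to a point while the chord of $S_{e_2}(K^\circ)$ has length $1$, so no pointwise chord domination can hold. (The volumes $|K^\circ|=|(S_{e_2}K)^\circ|=2$ do agree here, as they must for a linear image of the cube, which is exactly why the volume inequality survives even though the inclusion does not.)

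The statement that \emph{is} true, $|(S_uK)^\circ|\ge |K^\circ|$ for centrally symmetric $K$, is the Meyer--Pajor lemma, but its proof does not proceed by an inclusion of bodies. One either carries out the Meyer--Pajor fiberwise computation, which compares integrals of chord lengths rather than individual chords, or one uses the Rogers--Shephard/Campi--Gronchi observation that $\lambda\mapsto |K_\lambda^\circ|^{-1}$ is convex along the shadow system joining $K$ to its reflection $\sigma_uK$, with $K_{1/2}=S_uK$, and concludes by $|\sigma_uK^\circ|=|K^\circ|$. You would need to replace your inclusion step by one of these arguments. The remaining ingredients you invoke (Gross's convergence of iterated Steiner symmetrals to a ball of the same volume, and continuity of $K\mapsto|K^\circ|$ on symmetric bodies with uniformly positive inradius) are standard and unobjectionable, and the equality-case discussion can then be carried out along the lines of Meyer--Pajor or Saint-Raymond, though it too requires tracking equality in the volume lemma rather than in a set inclusion.
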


The corresponding functional form reads as follows
(see~\cite{ArtsteinAvidan:2004p118, Ball_phd}): for an even function
$f:\R^n\to \R$ with $0 < \int e^{-f} < \infty$, if $\mathcal Lf$
denotes its Legendre transform, then
\begin{equation}\label{functsantalo}
\int e^{-f}\int e^{-\mathcal L f} \le \Big(\int e^{-|x|^2/2}\, dx\Big)^2 = (2\pi)^n.
\end{equation}
Note that the Brunn-Minkowski inequality entails
$$ \sqrt{ |K|\, |K^\circ| } \le \left|\frac{K+K^\circ}2\right|.$$
However, in general we have $\frac{K+K^\circ}2 \varsupsetneq
B_2^n$. For instance, take $K=T(B_2^n)$, where $T\neq
\textrm{Id}_{\R^n}$ is a positive-definite symmetric operator. Then
$K^\circ = T^{-1}(B_2^n)$. Observe that $\frac{K+K^\circ}2
\supset \frac{T+T^{-1}}2(B_2^n)$ and
$$ \frac{T+T^{-1}}2 > \sqrt{T \, T^{-1}} = \textrm{Id}_{\R^n} $$
in the sense of symmetric matrices. This suggest that instead of
taking convex combinations, as in  the Brunn-Minkowski theory, we
would like to consider geometric means of convex bodies. It turns
out that this is exactly what complex interpolation does, and it is
a challenging question to understand real analogues of this
procedure.

In this note we will consider several ways of going from $K_0$
to $K_1$, or equivalently from a norm $\|\cdot \|_0$ to another norm
$\|\cdot \|_1$. There are many ways to recover the volume of $K$
from the associated norm $\| \cdot \|$. Let $p>0$ and $n\ge 1$.
There exists an explicit constant $c_{n,p}>0$ such that for every
centrally-symmetric convex body $K\subset \R^n$, with associated
norm $\|\cdot\|_K$, we have
\begin{equation}\label{volume}
\int_{\R^n} e^{-\|x\|_K^p / p} \, dx = c_{n,p}\, |K|.
\end{equation}
Note that the procedure~\eqref{indicatrix} corresponds to the case $p\to+\infty$.

We  aim to find ways of interpolating between norms in order to
recover, among other things,  the Brunn-Minkowski and the Santal\'o
inequalities.

Let us next put forward some notation as well as a formula that we shall use
throughout the  paper.

\begin{notation}\label{notation}
For a  function $F:\R^n\to \R$ such that $\int e^{-F(x)}\,
dx<+\infty$, we denote by $\mu_{F}$ the {\it probability} measure on
$\R^n$ given  by
$$d\mu_{F}(x):= \frac{\D e^{-F(x)}}{\int e^{-F}} \, dx.$$

For a function of $n+1$ variables  $F:I\times \R^n \to \R$, where
$I$ is an interval of $\R$, we denote, for a fixed $t\in I$,   $F_t:=F(t,\cdot):\R^n\to \R$
and then by $ \mu_{F_t}$ the corresponding probability measure on
$\R^n$.  We also set
$$\alpha(t) =  - \log\int_{\R^n} e^{-F_t(x)} dx. $$
\end{notation}

The variance with respect to a {\it probability} measure $\mu$ of a
function $u\in L^2(\mu)$ -- where, depending on the context, we
consider either real-valued or complex-valued functions -- is defined as the
$L^2$ norm of the projection of $u$ onto the space of functions
orthogonal to constant functions, i.e.
$${\rm Var}_{\mu}(u):= \int \left| u - \mbox{$\int$} u\, d\mu \right|^2 \, d\mu= \int |u|^2\, d\mu - \Big| \int u \, d\mu \Big|^2.$$
A straightforward  computation yields:

\begin{fact}
With  Notation~\ref{notation}, we have for every $t\in I$,
\begin{eqnarray}
\alpha''(t) & = & \int_{\R^n} \partial^2_{tt} F \, d\mu_{F_t}(x)- \left[\int_{\R^n} \big(\partial_t F(t,x)\big)^2\, d\mu_{F_t}(x) - \left( \int_{\R^n} \partial_t F(t,x)\, d\mu_{F_t}(x)\right)^2 \right] \nonumber \\
& = & \int_{\R^n} \partial^2_{tt} F \, d\mu_{F_t} - {\rm
Var}_{\mu_{F_t}} \big( \partial_t F\big), \label{formula}
\end{eqnarray}
assuming that $F$ is sufficiently regular to allow for the
differentiations under the integral sign.
\end{fact}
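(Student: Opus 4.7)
The plan is to differentiate twice under the integral sign. Writing $Z(t) := \int_{\R^n} e^{-F(t,x)}\, dx = e^{-\alpha(t)}$, so that $\alpha(t) = -\log Z(t)$, the chain rule gives
\[
\alpha''(t) \;=\; -\frac{Z''(t)}{Z(t)} + \left(\frac{Z'(t)}{Z(t)}\right)^{\!2}.
\]
Thus it suffices to compute $Z'(t)$ and $Z''(t)$ and to reorganize the terms into an integral of $\partial^2_{tt} F$ against $\mu_{F_t}$ and a variance of $\partial_t F$.

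First I would differentiate once, obtaining
\[
Z'(t) \;=\; -\int_{\R^n} \partial_t F \, e^{-F}\, dx, \qquad \text{so} \qquad \frac{Z'(t)}{Z(t)} \;=\; -\int_{\R^n} \partial_t F \, d\mu_{F_t}.
\]
Differentiating a second time and using $\partial_t(e^{-F}) = -(\partial_t F)\, e^{-F}$ then yields
\[
Z''(t) \;=\; \int_{\R^n} \bigl[(\partial_t F)^2 - \partial^2_{tt} F \bigr]\, e^{-F}\, dx,
\]
so that $Z''(t)/Z(t) = \int (\partial_t F)^2 d\mu_{F_t} - \int \partial^2_{tt} F\, d\mu_{F_t}$. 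Plugging these into the formula for $\alpha''(t)$ and regrouping, the terms $\int (\partial_t F)^2\, d\mu_{F_t}$ and $\bigl(\int \partial_t F\, d\mu_{F_t}\bigr)^2$ combine into $-{\rm Var}_{\mu_{F_t}}(\partial_t F)$ by the definition of the variance recalled just before the statement, while the remaining term is $\int \partial^2_{tt} F\, d\mu_{F_t}$. This is exactly \eqref{formula}.

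The computation itself is a routine application of the quotient rule and presents no substantive obstacle. The only point requiring care is the justification of the two interchanges of derivative and integral; this is standard and follows by dominated convergence, locally in $t$, from the regularity assumption made in the hypothesis -- one needs $e^{-F}$, $|\partial_t F|\, e^{-F}$, $(\partial_t F)^2\, e^{-F}$ and $|\partial^2_{tt} F|\, e^{-F}$ to admit integrable dominating functions in $x$ on each compact subinterval of $I$.
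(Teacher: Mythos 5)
Your proof is correct and is exactly the ``straightforward computation'' the paper alludes to without spelling out: set $Z(t)=\int e^{-F(t,x)}\,dx$, differentiate $\alpha=-\log Z$ twice, differentiate under the integral sign, and regroup into a variance. There is no genuinely different route here, and your remark on the dominated-convergence hypotheses matches the paper's own caveat about regularity permitting differentiation under the integral sign.
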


Our goal is to understand for which families of functions $F$ the
function $\alpha$ is convex, by looking at $\alpha''$. Actually, we
will first discuss the complex case, where convexity is replaced by
plurisubharmonicity.  We will recover the fact that families given
by complex interpolation, or equivalently by degenerate Monge-Ampère
equations,  lead to subharmonic functions $\alpha$. Then we will try
to see, at a very heuristic level, what can be said in the real
case. A final section proposes a local $L^2$ approach, to the
Busemann inequality, similar to that used in the preceding sections.

\medskip {\it{Acknowledgement.}}
We thank Yanir Rubinstein and Bo Berndtsson for interesting, related discussions.


\section{The complex case}

Let $K_0$ and $K_1$ be two unit balls of $\C^n$ associated with the
(complex vector space) norms $\|\cdot \|_0$ and $\|\cdot\|_1$. Note
that here we are working with the class of convex bodies $K$ of
$\R^{2n}$ that are {\it circled}, meaning that $e^{i\theta} K = K$
for every $\theta \in \R$. We think of a  normed space as a triplet
consisting of a vector space, a norm and its unit ball. Consider the
complex normed spaces $X_0=(\C^n, \|\cdot \|_0, K_0)$ and
$X_1=(\C^n, \|\cdot\|_1, K_1)$ and write
$$X_z = (\C^n , \|\cdot\|_z , K_z)$$
for the complex Calder\'on interpolated space at
$$z\in C:= \{w \in \C \; ; \Re(w) \in [0,1]\} $$
where $\Re(w)$ is the real part of $w \in \C$. Recall that $X_z =
X_{\Re(z)}$ and therefore  $K_z = K_{t}$ with $t=\Re(z)\in [0,1]$.
We have:
\begin{theo}[\cite{CE02}]
The function $t\to|K_t|$ is log-concave on $[0,1]$ and so
\begin{equation}
\label{santalo_interpol}
|K_0|^{1-t}\, |K_1|^t \le |K_t| .
\end{equation} \label{th1037}
\end{theo}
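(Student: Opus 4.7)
The plan is to reduce the theorem to the complex Prékopa-type theorem of Berndtsson, by exploiting the basic feature of Calderón interpolation for circled bodies: the logarithm of the Minkowski functional
\[
u(z,x):=\log\|x\|_{\Re z}, \qquad (z,x)\in C\times\C^n,
\]
is plurisubharmonic \emph{jointly} in $(z,x)$. This can be traced either to the definition of $\|v\|_z$ as an infimum over norms of boundary values of holomorphic extensions $f:C\to\C^n$ with $f(z)=v$, or equivalently to the fact that $u$ is the Perron–Bremermann envelope solving a degenerate homogeneous complex Monge–Ampère equation with boundary data $\log\|\cdot\|_0$ and $\log\|\cdot\|_1$.

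Given this input, the computation is almost immediate. Choose any $p>0$ and apply the volume formula \eqref{volume} on $\R^{2n}\simeq\C^n$ to get
\[
|K_t|=\frac{1}{c_{2n,p}}\int_{\C^n} e^{-F(z,x)}\,dx,\qquad F(z,x):=\tfrac{1}{p}\|x\|_{\Re z}^p=\tfrac{1}{p}\exp\bigl(p\,u(z,x)\bigr),
\]
where $t=\Re z$. Since $u$ is PSH on $C\times\C^n$ and $s\mapsto e^{ps}/p$ is convex and nondecreasing, $F$ is also plurisubharmonic on $C\times\C^n$.

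Now invoke the complex Prékopa theorem \cite{Berndtsson:2008p480}: if $F$ is a PSH function on $\Omega\times\C^n$ with suitable integrability, then $z\mapsto -\log\int_{\C^n} e^{-F(z,x)}\,dx$ is plurisubharmonic on $\Omega$. Applied with $\Omega$ the interior of the strip $C$, this gives that $z\mapsto -\log|K_{\Re z}|$ is PSH in $z$. But a PSH function on $C$ that depends only on $t=\Re z$ is a convex function of $t$. Hence $t\mapsto-\log|K_t|$ is convex on $[0,1]$, which is precisely \eqref{santalo_interpol}.

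The main obstacle is not the integration step, but establishing the joint plurisubharmonicity of $\log\|x\|_z$; this is the heart of the complex-interpolation input and is a nontrivial statement, equivalent to the Monge–Ampère characterization of Calderón interpolation. The remaining technicalities — lack of smoothness of $F$, approximation by smooth strictly PSH functions in order to legitimately apply Berndtsson's theorem, and integrability near the boundary of $C$ — are standard and can be handled by a regularization of $u$ followed by a passage to the limit, after which one uses formula \eqref{formula} (with derivatives in $z$ and $\bar z$ in place of $t$) on the smoothed problem.
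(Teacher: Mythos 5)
Your proof is correct, but you have essentially reproduced the \emph{original} proof from \cite{CE02} that the paper explicitly mentions and then chooses \emph{not} to reprove. Right after the statement of the theorem the paper says: ``Theorem~\ref{th1037} was proved using the complex version of the Pr\'ekopa theorem obtained by Berndtsson~\cite{Berndtsson:1998p255}\ldots Here, we would like to provide a different direct proof, by combining the results of Rochberg and H\"ormander's {\it a priori} $L^2$-estimates.'' Your route is precisely the first one: establish that $F(z,x)=\frac{1}{p}\|x\|_{\Re z}^p$ is plurisubharmonic on the strip times $\C^n$ (you go through $\log\|x\|_{\Re z}$ being jointly PSH, which is the ``negatively curved''/PSH-hull characterization of Calder\'on interpolation), then invoke Berndtsson's complex Pr\'ekopa theorem as a black box, and finally observe that a PSH function of $\Re z$ alone is convex.

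The paper's own proof is more internal. Taking $p=2$ specifically and $F(z,w)=\tfrac12\|w\|_z^2$, it does not invoke Berndtsson's theorem but instead (i) uses Rochberg's differential equation~\eqref{rochberg}, which identifies the interpolation family as a solution of the degenerate complex Monge--Amp\`ere equation $\det \Hess^{\C}_{z,w}F=0$, to rewrite $\partial^2_{z\bar z}F$ as a $\ov{\partial}$-Dirichlet form in $w$; (ii) plugs this into the subharmonicity formula~\eqref{complexformula}; (iii) applies H\"ormander's a priori estimate~\eqref{hormander} to bound the variance; and (iv) crucially uses the $S^1$-invariance of $F$ and $u=\partial_z F$ to identify the Bergman projection $P_H u$ with the constant $\int u\,d\mu_{F_z}$, which is exactly what is needed to match the variance term. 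What each approach buys: your route is shorter once Berndtsson's theorem is granted, works verbatim for any $p>0$, and keeps the Monge--Amp\`ere characterization in the background as mere plurisubharmonicity; the paper's route brings the $L^2$ mechanism (H\"ormander's estimate plus the symmetry argument killing the holomorphic projection) to the surface, which is precisely the template the paper wants to carry over to the real case in the following section. Minor remark: the complex Pr\'ekopa theorem you need is \cite{Berndtsson:1998p255}, not \cite{Berndtsson:2008p480}; the latter is the later, more general Bergman-kernel subharmonicity work.
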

In the case of complex unit balls, this result improves upon the
Brunn-Minkowski inequality since it can be  verified, by using the
Poisson kernel on $[0,1]\times \C^n$ and the definition of the
interpolated norm,  that
$$K_t \subset (1-t) K_0 + t K_1=K(t).$$
In this setting,  it also gives the Santal\'o inequality. Indeed,
for a given complex unit ball $K\subset \C^n$, let $X_0$ be the
associated complex normed space, and let $X_1$ be the dual conjugate
space which has $K^\circ \subset \C^n$ as its unit ball. Then it is
well known that
\begin{equation} X_{1/2} = \ell_2^n (\C)= \ell_2^{2n}(\R)
\label{eq_1035}
\end{equation}
and therefore we obtain
$$\sqrt{|K|\, |K^\circ|} \le |B_2^{2n}|.$$
(Let us mention here that the conjugation bar in the statements of~\cite{CE02} is superfluous according to standard definitions).

\medskip
 In order to have a
better grasp on complex interpolation, let us write an explicit
formula in the specific case of Reinhardt domains. A subset $K
\subset \C^n$ is {\it Reinhardt} if for any $z = (z_1,\ldots,z_n)
\in \C^n$,
$$ (z_1,\ldots,z_n) \in K \quad \Leftrightarrow \quad
(|z_1|,\ldots,|z_n|) \in K. $$ Note that a Reinhardt convex set is
necessarily circled. In the case where $X_0=(\C^n, \|\cdot \|_0,
K_0)$ and $X_1=(\C^n, \|\cdot\|_1, K_1)$ are such that $K_0$ and
$K_1$ are Reinhardt, the interpolated space $X_z = (\C^n ,
\|\cdot\|_z , K_z)$ satisfies $$ K_z = \left \{ z \in \C^n \, ; \,
\exists (a,b) \in K_0 \times K_1, \ |z_j| = |a_j|^{1-t} |b_j|^{t } \
\text{for} \ j=1,\ldots,n \right \}
$$ with $t = \Re(z)$. The case of Reinhardt unit balls is
particularly simple and easy to analyze, but it has its limitations.
Still, the idea is that in general, $K_t$ should be understood as a
``geometric mean'' of the bodies $K_0$ and $K_1$, whereas the
Minkowski sum (\ref{eq_355}) reminds us of an arithmetic mean.

\medskip Theorem \ref{th1037} was proved using the complex version of the Pr\'ekopa theorem
obtained by Berndtsson~\cite{Berndtsson:1998p255}, which was derived
in~\cite{CE05} using a local computation and $L^2$ spectral
inequalities of H\"ordmander type. Here, we would like to provide a
different direct proof, by combining the results of Rochberg and
H\"ormander's {\it a priori} $L^2$-estimates. Let $\|\cdot\|_z$ be a
family of interpolated norms on $\C^n$ and $K_z= B_{\|\cdot\|_z}$.
We assume for simplicity that these norms are smooth and strictly
convex, so that we will not have to worry about justification of the
differentiations under the integral signs. In fact, by approximation
we can assume that $1/R \leq \Hess \|\cdot \|^2_k \leq R$ (for some
large constant $R > 1$) for $k=1,2$, and these bounds remain valid
for the interpolated norms. Introduce the function $F:C\times \C^n
\to \R$,
$$F(z,w) :=\frac12 \|w\|_z^2.$$
Denote the Lebesgue measure on $\C^n\simeq \R^{2n}$ by $\lambda$,
and introduce, in view of~\eqref{volume},
$$\alpha(z) = - \log \int_{\C^n}e^{-F(z,w) } \, d\lambda(w) = - \log|K_z| - \log(c_{2n,2}) $$
for $z\in C$. Our goal is to prove that $t\to \alpha(t)$ is convex
on $[0,1]$. Since $\alpha(z)= \alpha(\Re(z))$, this is equivalent to
proving that $\alpha$ is subharmonic on the strip $C$. The following
analogue of~\eqref{formula} is also straightforward:
$$\frac14 \Delta\alpha(z) = \partial^2_{z \ov{z}}  \alpha (z) = \int_{\C^n} \partial^2_{z \ov{z}}  F \, d\mu_{F_z} -\int_{\C^n} \left| \partial_z F (w) - \mbox{$\int$}\partial_z F \, d\mu_{F_z} \right|^2 \, d\mu_{F_z}(w),$$
where $\mu_{F_z}$ is the probability measure on $\C^n$ given by $\D
d\mu_{F_z}(w) = \frac{e^{-F(z,w)}}{\int
e^{-F(z,\zeta)}d\lambda(\zeta)} d\lambda(w)$.

It was explained by Rochberg \cite{Rochberg84} that complex
interpolation is characterized by the following differential
equation:
\begin{equation}\label{rochberg}
\partial^2_{z\ov{z}} F = \sum_{j,k=1}^n F^{j\ov{k}} (z,w) \partial_{\overline{w_j}}(\partial_z F) \overline{ \partial_{\overline{w_k}} (\partial_z F)}
\end{equation}
where $ (F^{j\ov{k}})_{j,k\le n}$ is the inverse of the complex
Hessian in the $w$-variables of $F(z,w)$, that is
$$\left(F^{j\ov{k}}\right)_{j,k\le n} = \left(\Hess^{\C}_w F\right)^{-1} := \left[\left(\partial^2_{w_j\ov{w_k}} F \right)_{j,k\le n}\right]^{-1}.$$
Actually, the function $F$ is plurisubharmonic on $C\times \C^n
\subset \C^{n+1}$ and~\eqref{rochberg} expresses the fact that it is
a solution of the degenerate Monge-Amp\`ere equation
$$\det\Big(\Hess^{\C}_{z,w} F \Big)=0 $$
where $\Hess^{\C}_{z,w} F$ is the full complex Hessian of $F(z,w)$,
an $(n+1) \times (n+1)$ matrix.

As a consequence of the previous discussion, we have that, for a fixed $z\in C$ and setting $u:=\partial_z F(z, \cdot) :
\C^n \to \C$,
\begin{equation}\label{complexformula}
\Delta\alpha(z)/4=\int_{\C^n}  \sum_{j,k=1}^n F^{j\ov{k}}
\partial_{\overline{w_j}}u \, \overline{ \partial_{\overline{w_k}}u} \, d\mu_{F_z} - \int
\left| u - \mbox{$\int$}u \, d\mu_{F_z} \right|^2 \, d\mu_{F_z}.
\end{equation}
Of course, it is now irresistible to appeal to H\"ormander's
{\it a priori} estimate (see e.g.~\cite{H1}). It states that if $F:\C^n\to \R$ is a (strictly)
plurisubharmonic function and if $u$ is a (smooth enough) function,
then
\begin{equation}\label{hormander}
 \int_{\C^n} \left| u -P_H u \right|^2\, d\mu_{F} \le  \int_{\C^n}  \sum_{j,k=1}^n F^{j\ov{k}} \partial_{\overline{w_j}}u \, \overline{ \partial_{\overline{w_k}}u} \, d\mu_{F}
 \end{equation}
where $\D d\mu_{F}(w) = \frac{e^{-F(w)}}{\int e^{-F}d\lambda}
d\lambda(w)$ and $P_H:L^2(\mu_F) \to L^2(\mu_F)$ is the orthogonal
projection onto the closed space $H=\{h\in L^2(\mu_F)\; ;\
\ov{\partial} h=0\}$ of holomorphic functions. Actually, this {\it a
priori} estimate on $\C^n$  is rather easy to prove by duality and
integration by parts. We now apply this result to $F=F(z, \cdot)$,
$\mu_F=\mu_{F_z}$ and $u=\partial_z F$. Note that $F$ (and thus
$\mu_F)$ and $u$ are invariant under the action of $S^1$: $F(z,
e^{i\theta}w) = F(z, w)$ and the same is true for $\partial_z F$.
This implies that the function $P_H u$ has the same invariance, but
since it is a holomorphic function on $\C^n$, it has to be constant.
Therefore $P_H u = \int u d\mu_{F_z}$ and we indeed obtain  that
$\Delta \alpha(z) \ge 0$ by combining~\eqref{complexformula}
and~\eqref{hormander}, as desired.

Here, we reproved~\eqref{santalo_interpol} without using
explicitly~\cite{Berndtsson:1998p255}, but rather by combining the
local computations of~\cite{CE05} and the degenerate Monge-Amp\`ere
equation satisfied by the complex interpolation. In fact, this
computation also appears, in a much more general and deep form, in
recent works by Berndtsson~\cite{Berndtsson:2008p480,
Berndtsson:2009p2332}. The reason is that complex interpolation
corresponds to a geodesic in the space of metrics, and therefore
enters Berndtsson's abstract theorems. Also, it can be noticed that
complex interpolation corresponds to an extremal construction (for
given boundary data), in the sense that it can be viewed as a
plurisubharmonic hull. Equivalently, plurisubharmonic functions may
be viewed as sub-solutions of degenerate Monge-Amp\`ere equations.

Following our presentation, it is very tempting to develop an
analogous presentation for convex bodies in $\R^n$. However, the
real case is more complex, as we shall now see.


\section{Real interpolations}

The concept of interpolation and the basic properties we present
here are due to Semmes~\cite{Semmes88}, building on previous work by
Rochberg \cite{Rochberg84}. Semmes indeed raised the question of
whether such interpolations (which are not interpolations in the
operator sense) could be used to prove inequalities, by showing that
certain functionals are convex along the interpolation. Our main
contribution here is to explain that this is indeed the case, by
connecting this interpolation with some well-known spectral
inequalities. However, some discussions will remain at a heuristic
level, as it is not the purpose of this note to discuss existence,
unicity and regularity of solutions to the partial differential
equations we refer to.

\begin{defi}[Rochberg-Semmes interpolation~\cite{Semmes88}]
Let $I$ be an interval of $\R$ and $p\in [1, +\infty]$. We say that
a smooth function $F:I \times \R^n\to \R$ is a family of
$p$-interpolation if for any $t \in I$, the function $F(t,\cdot)$ is
(strongly) convex on $\R^n$ and for $(t,x)\in I\times \R^n$
\begin{equation}\label{kinterpol}
\partial^2_{tt} F = \frac1{p} \big(\Hess_x F\big)^{-1}\nabla\partial_t F \cdot \nabla \partial_t F .
\end{equation}
Accordingly, when $\partial^2_{tt} F \ge \frac1p \big(\Hess_x
F\big)^{-1}\nabla\partial_t F \cdot \nabla \partial_t F $, we say
that $F$ is a sub-family of $p$-interpolation. \label{def_1100}
\end{defi}

In Definition \ref{def_1100}, we denote by $\nabla F$ the gradient
of $F(t,x)$ in the $x$ variables, and a function is strongly convex
when $\Hess_x F>0$. By standard linear algebra we have the following
equivalent formulation in terms of the degenerate Monge-Amp\`ere
equation:

\begin{prop}[Interpolation and degenerate Monge-Amp\`ere equation]
Let $F:I \times \R^n\to \R$ be a smooth function such that
$F(t,\cdot)$ is (strongly) convex on $\R^n$ and introduce, for
$(t,x)\in I\times \R^n$, the $(n+1)\times (n+1)$ matrix
\begin{equation}\label{def:H}
H=H_p F (t,x):=
\begin{pmatrix}
\partial^2_{tt} F &  (\nabla_x \partial_t F)^\ast \\
 &  \\
\nabla_x \partial_t F & \D p \mathnormal{\Hess}_x F\\
\end{pmatrix}.
\end{equation}
Then, $F$ is a family (resp. a sub-family)  of $p$-interpolation if and only if $\D \det H = 0$ (resp. $\det H \ge 0$) on $I\times \R^n$.
\end{prop}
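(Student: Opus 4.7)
The plan is to apply the Schur complement identity to the block matrix $H$, which naturally splits as
\[
H = \begin{pmatrix} A & B^{\ast} \\ B & D \end{pmatrix}, \quad
A = \partial^2_{tt} F, \quad B = \nabla_x \partial_t F, \quad D = p\,\Hess_x F,
\]
where $A$ is a scalar, $B$ is an $n \times 1$ column, and $D$ is an $n \times n$ symmetric matrix.

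The key observation is that the strong convexity assumption $\Hess_x F > 0$ together with $p \geq 1$ ensures $D$ is invertible and in fact positive definite. This means the Schur complement is well-defined, and we may factor
\[
\det H \;=\; \det(D) \cdot \bigl( A - B^{\ast} D^{-1} B \bigr)
\;=\; p^n \det(\Hess_x F) \cdot \left( \partial^2_{tt} F - \frac{1}{p}\, (\Hess_x F)^{-1}\nabla \partial_t F \cdot \nabla \partial_t F \right).
\]
This is essentially the only computation, and it can be justified either by the standard block-matrix determinant formula via the factorization
\[
\begin{pmatrix} A & B^{\ast} \\ B & D \end{pmatrix}
= \begin{pmatrix} 1 & B^{\ast} D^{-1} \\ 0 & I \end{pmatrix}
\begin{pmatrix} A - B^{\ast} D^{-1} B & 0 \\ 0 & D \end{pmatrix}
\begin{pmatrix} 1 & 0 \\ D^{-1} B & I \end{pmatrix},
\]
or by expanding along the first row after reducing via row operations.

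Since the prefactor $p^n \det(\Hess_x F)$ is strictly positive at every $(t,x) \in I \times \R^n$, the sign of $\det H$ coincides with the sign of $\partial^2_{tt} F - \frac{1}{p} (\Hess_x F)^{-1}\nabla \partial_t F \cdot \nabla \partial_t F$. Comparing with \eqref{kinterpol} immediately gives the two equivalences: $\det H = 0$ corresponds to the equality defining a $p$-interpolation family, and $\det H \ge 0$ corresponds to the inequality defining a $p$-interpolation sub-family.

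There is no real obstacle here; the statement is a direct linear-algebra reformulation, and the only point to check carefully is the strict positivity of the bottom-right block so that the Schur complement formula applies pointwise throughout $I \times \R^n$.
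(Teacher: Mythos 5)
Your proof is correct and is exactly the ``standard linear algebra'' the paper alludes to without spelling out. The Schur complement factorization
\[
\det H = \det(D)\,\bigl(A - B^{\ast} D^{-1} B\bigr)
= p^n \det(\Hess_x F)\Bigl(\partial^2_{tt} F - \tfrac{1}{p}(\Hess_x F)^{-1}\nabla\partial_t F\cdot\nabla\partial_t F\Bigr)
\]
combined with the strict positivity of the prefactor is precisely the right argument, and it cleanly yields both the equality and inequality cases. One small note: you invoke ``$p\geq 1$'' to justify invertibility of $D$, but all that is needed is $p>0$ together with $\Hess_x F > 0$; this does not affect correctness since the paper restricts to $p\in[1,\infty]$ anyway. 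It may also be worth observing that because the lower-right block is positive definite, the condition $\det H \geq 0$ is equivalent to $H \geq 0$ (the Schur complement being nonnegative is exactly positive semi-definiteness of the block matrix), which is how the paper later phrases the $p=1$ case and how Proposition~\ref{prop_1132} states the criterion; your determinant computation subsumes that reformulation.
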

In particular, $1$-interpolation corresponds exactly to the
degenerate Monge-Amp\`ere equation on $I\times \R^n$. In fact, we
see $p$-interpolation as a (Dirichlet) boundary value problem.

\begin{defi}
Let $F_0$ and $F_1$ be two smooth convex functions on $\R^n$. We say
that $\{F_t:\R^n\to \R\}_{\in [0,1]}$ is a $p$-interpolated family
associated with $\{F_0, F_1\}$ if $F(t,x)=F_t(x)$ is a family of
$p$-interpolation on $[0,1]\times \R^n$ with boundary value
$F(0,\cdot) = F_0$ and $F(1,\cdot) =F_1$.
\end{defi}

\medskip As we said above, we will not discuss in this exposition questions related to
existence, uniqueness and regularity of solutions to this Dirichlet
problem (except for the easy case $p=1$, explained below). However,
it is reasonable to expect that generalized solutions, which are
sufficient for our purposes, can be constructed by using Perron
processes, as mentioned by Semmes~\cite{Semmes88}.

Using Notation~\ref{notation}, given  a family or a sub-family of
$p$-interpolation $F$,
we aim to understand the convexity of the function on $I$,
\begin{equation}\label{defalpha}
\alpha(t)= - \log\int_{\R^n} e^{-F(t,x)}\, dx.
\end{equation}
In view of~\eqref{formula}, we see that for every fixed $t\in I$ we
have the implication
\begin{equation}\label{convexity}
\textrm{Var}_{\mu_{F_t}} (\partial_t F) \le \frac1p \int_{\R^n}
\big(\Hess_x F\big)^{-1}\nabla\partial_t F \cdot \nabla \partial_t F
\, d\mu_{F_t}  \quad \Longrightarrow \quad \alpha''(t) \ge 0,
\end{equation}
under some mild regularity assumptions. The left-hand side is of
course reminiscent of the real version of H\"ormander's
estimate~\eqref{hormander}, which is known as the Brascamp-Lieb from
\cite{BL:1976}. Recall that this inequality states that if
$F:\R^n\to \R$ is a (strongly) convex function and if $u\in
L^2(\mu_F)$ is a locally Lipschitz  function, then
\begin{equation}\label{BL}
\textrm{Var}_{\mu_F}(u)\le  \int_{\R^n}  \big(\Hess_x F\big)^{-1} \nabla u \cdot \nabla u\, d\mu_{F} ,
 \end{equation}
with our  notation $\D d\mu_{F}(x) = \frac{e^{-F(x)}}{\int e^{-F}} \, dx$.
Again, this inequality can easily be proven along the lines of
H\"ormander's approach (see below).

Applying the Brascamp-Lieb inequality~\eqref{BL} to $F=F(t,\cdot)$
and $u=\partial_t F$ when $F$ is a $1$-interpolation sub-family, we
obtain, in view of~\eqref{convexity}, the following statement:
\begin{prop}
If $F$ is a sub-family of $1$-interpolation, then $\alpha$ is convex.
\end{prop}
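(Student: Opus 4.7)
The plan is to combine three ingredients that are already in place in the excerpt: the variance identity for $\alpha''(t)$ from Fact (formula \eqref{formula}), the pointwise inequality defining a sub-family of $1$-interpolation (Definition \ref{def_1100} with $p=1$), and the Brascamp-Lieb inequality \eqref{BL} applied slice by slice in $t$. Indeed, implication \eqref{convexity} reduces the proposition to verifying its left-hand side.

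Concretely, I would fix $t \in I$ and proceed as follows. By \eqref{formula},
\[
\alpha''(t) = \int_{\R^n} \partial^2_{tt} F \, d\mu_{F_t} - \mathrm{Var}_{\mu_{F_t}}(\partial_t F).
\]
Since $F$ is a sub-family of $1$-interpolation, we have the pointwise bound $\partial^2_{tt} F(t,x) \ge (\Hess_x F(t,x))^{-1} \nabla \partial_t F(t,x) \cdot \nabla \partial_t F(t,x)$; integrating this against $d\mu_{F_t}$ yields
\[
\int_{\R^n} \partial^2_{tt} F \, d\mu_{F_t} \ge \int_{\R^n} (\Hess_x F)^{-1} \nabla \partial_t F \cdot \nabla \partial_t F \, d\mu_{F_t}.
\]
On the other hand, because $F(t,\cdot)$ is strongly convex, the Brascamp-Lieb inequality \eqref{BL} applied with the strongly convex potential $F(t,\cdot)$ and the test function $u = \partial_t F(t, \cdot)$ gives
\[
\mathrm{Var}_{\mu_{F_t}}(\partial_t F) \le \int_{\R^n} (\Hess_x F)^{-1} \nabla \partial_t F \cdot \nabla \partial_t F \, d\mu_{F_t}.
\]
Substituting these two bounds into the identity for $\alpha''(t)$ produces the cancellation $\alpha''(t) \ge 0$, which is exactly the implication \eqref{convexity}.

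The only real obstacle is a matter of regularity rather than of substance: one has to justify the double differentiation under the integral sign that produces \eqref{formula}, and make sure that $\partial_t F(t,\cdot)$ lies in $L^2(\mu_{F_t})$ and is locally Lipschitz in $x$ so that \eqref{BL} genuinely applies. Under the smoothness assumptions built into the definition of a (sub-)family of $p$-interpolation (Definition \ref{def_1100}), together with the strong convexity of each $F(t,\cdot)$, these are routine; in practice one can approximate a general sub-family by smoother ones with uniform quadratic lower bounds on $\Hess_x F$ and pass to the limit in the convexity of $\alpha$.
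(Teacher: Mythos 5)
Your proof is correct and matches the paper's argument exactly: the paper itself obtains this proposition by applying the Brascamp-Lieb inequality \eqref{BL} to $F = F(t,\cdot)$ and $u = \partial_t F$ and invoking the implication \eqref{convexity}, which is precisely the combination of \eqref{formula} and the integrated sub-family inequality that you spell out. The regularity caveats you flag are also those the paper implicitly absorbs into "mild regularity assumptions."
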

The first comment is that we have not proved anything new! Indeed,
it is directly verified below that for any $C^2$-smooth function
$F$,
\begin{equation} F \textrm{ is a sub-family of $1$-interpolation} \quad \Longleftrightarrow \quad F \textrm{ is convex on } I\times
\R^n. \label{eq_1042}
\end{equation}
Therefore, we have reproduced Pr\'ekopa's Theorem~\ref{prekopa}. In
order to demonstrate \eqref{eq_1042}, observe that the positive
semi-definiteness of the matrix $H_1 F(t,x)$ amounts to the
inequality
$$ (\mathnormal{\Hess}_x F) y \cdot y + 2 \nabla_x (\partial_t F)
\cdot y + \partial^2_{tt} F  \geq 0 \quad \quad \text{for all} \ \ y
\in \R^n, $$ or equivalently,
$$ \partial^2_{tt} F \geq \sup_{y \in \R^n}  \left[ 2 \nabla_x (\partial_t
F) \cdot y - (\mathnormal{\Hess}_x) F y \cdot y \right] =
\big(\Hess_x F\big)^{-1}\nabla_x \partial_t F \cdot \nabla_x
\partial_t F, $$ as $\Hess_x F$ is positive definite. Let us note
that if $F_0$ and $F_1$ are given, then the associated family of
$1$-interpolation -- equivalently, the unique solution to the
degenerate Monge-Amp\`{e}re equation on $[0,1]\times \R^n$ with
$F(t, x)$ convex in $x$ -- is
\begin{equation}\label{infconv}
F(t,w)= \inf_{w=(1-t) x + t y} \big\{ \, (1-t) F_0(x) + tF_1(y)\, \big\} .
\end{equation}
Every sub-family of $1$-interpolation is {\it above} this $F$, and
thus the statement of Pr\'ekopa's Theorem reduces to
$1$-interpolation families (an argument that is standard in the
study of functional Brunn-Minkowski inequalities). One way to
recover the Brunn-Minkowski inequality directly from this family $F$
of $1$-interpolation, is to take, as in the derivation from
Pr\'ekopa's theorem, something like $F_0 (x) =\|x\|_{K_0}^q/q$, $
F_1(y):=\|y\|_{K_1}^q/q$ and let $q\to +\infty$.

We have just shown that Pr\'ekopa's theorem reduces, locally, to the
Brascamp-Lieb inequality. This is parallel to the complex setting,
i.e to the local $L^2$-proof  of the complex Pr\'ekopa theorem of
Berndtsson given in~\cite{CE05} and extended
in~\cite{Berndtsson:2008p480, Berndtsson:2009p2332}. The converse
procedure was known, starting from the work of  Brascamp and Lieb;
more explicitely,   Bobkov and Ledoux \cite{Bobkov:2000p81} noted
that the Pr\'ekopa-Leindler inequality (an extension of Pr\'ekopa's
result to the case fibers are not convex) indeed implies the
Brascamp-Lieb inequality. We also emphasize Colesanti's work
\cite{Colesanti:2008p2467},  where, starting from the
Brunn-Minkowski inequality, spectral inequalities of Brascamp-Lieb
type on the boundary $\partial K$ of a  convex body $K\subset \R^n$
are obtained. This can also be recovered by applying the
Brascamp-Lieb inequality to homogeneous functions. The conclusion is
that all of these results are the global/local versions of the same
phenomena. At the local level, we have reduced the problem to the
inequality~\eqref{BL} which expresses a spectral bound in
$L^2(\mu_F)$ for the  elliptic operator associated with the
Dirichlet form on the right-hand side of~\eqref{BL}.

For completeness, we would like to briefly recall here H\"ormander's
original approach to~\eqref{BL}. Consider the Laplace-type operator on $L^2(\mu_F)$,
$$L:=\Delta - \nabla F \cdot \nabla ,$$
that we define, say, on  $C^2$-smooth compactly supported functions.
First, recall the integration by parts formulae, $\int uL\varphi \,
d\mu_F = -\int \nabla u\cdot \nabla \varphi \, d\mu_F$ and
\begin{equation}\label{ipp}
\int_{\R^n} (L\varphi)^2 \, d\mu_F = \int_{\R^n} (\Hess_x F) \nabla
\varphi \cdot \nabla \varphi \, d\mu_F  + \int_{\R^n} \|\Hess_x
\varphi \|_2^2 \, d\mu_F,
\end{equation}
where $ \|\Hess \varphi \|_2^2  = \sum_{i,j\le n} (\partial_{i,j}^2
\varphi )^2.$
Let $u$ be a locally-Lipschitz function on $\R^n$. We use the
(rather weak) standard observation that the image by $L$ of the
$C^2$-smooth compactly supported functions is dense in the space of
$L^2(\mu_F)$ functions orthogonal to constants (see e.g.
\cite{CFM}). For $\eps > 0$ let $\vphi$ be a $C^2$-smooth,
compactly-supported function such that $L \vphi - (u - \int u d
\mu_F)$ has $L^2(\mu_F)$-norm smaller than $\eps$. Then, by
integration by parts and using~\eqref{ipp} we get
\begin{eqnarray*}
{\rm Var}_{\mu_F}(u) &=& 2 \int  \big(u- \mbox{$\int$} u\, d\mu_F\big) L\varphi \, d\mu_F - \int (L\varphi)^2 \, d\mu_F + \int \left( L \vphi -  \big(u- \mbox{$\int$} u\, d\mu_F\big)\right)^2 d \mu_F \\
& \le & -2\int \nabla u \cdot \nabla \varphi \, d\mu_F -  \int (\Hess_x F) \nabla \varphi \cdot \nabla \varphi \, d\mu_F  - \int \|\Hess_x \varphi \|_2^2 \, d\mu_F + \eps^2 \\
&\le &  -2\int \nabla u \cdot \nabla \varphi -  \int (\Hess_x F) \nabla \varphi \cdot \nabla \varphi \, d\mu_F + \eps^2 \\
& \le &  \int  \big(\Hess_x F\big)^{-1} \nabla u \cdot \nabla u\,
d\mu_{F} + \eps^2,
\end{eqnarray*}
and \eqref{BL} follows by letting $\eps$ tend to zero.

Let us go
back to interpolation families. As we said, $1$-sub-interpolation  corresponds to a
function $F$ that is convex on $I\times\R^{n}$. More generally, we
have the following characterization, proved by Semmes:

\begin{prop}
For a smooth function $F:I\times\R^{n}\to \R$, the following are
equivalent:
\begin{itemize}
\item $F$ is a sub-family of $p$-interpolation.
\item With the notation~\eqref{def:H}, we have, $\forall (t,x)\in I\times\R^n$,  $\D H_p F (t,x) \ge 0$.
\item For all $x_0, y_0\in \R^n$, the function
$$(s,t) \longto F \left(t,x_0 + (t+\sqrt{p-1} \, s) y_0 \right)$$
is subharmonic on the subset of $\R^2$ where it is defined.
\end{itemize} \label{prop_1132}
\end{prop}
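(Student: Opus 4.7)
The three statements split naturally into the block-matrix reformulation (first$\Leftrightarrow$second) and the reduction to planar subharmonicity (second$\Leftrightarrow$third). My plan is to handle them in this order.

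The equivalence between the first two bullet points is the same Schur complement computation that the authors already performed for the case $p=1$ just below equation \eqref{eq_1042}. Since $\Hess_x F$ is positive definite (strong convexity of $F(t,\cdot)$), the block matrix $H_p F(t,x)$ is positive semi-definite if and only if the scalar Schur complement with respect to the lower-right block $p\,\Hess_x F$ is non-negative, i.e.
\[
\partial^2_{tt} F - \frac{1}{p} (\Hess_x F)^{-1} \nabla\partial_t F \cdot \nabla \partial_t F \geq 0.
\]
This is exactly the sub-family condition. No real difficulty here; it is pure linear algebra.

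For the equivalence with the third bullet, I would simply compute the two-dimensional Laplacian of $G(s,t) := F\!\left(t,\, x_0 + (t+\sqrt{p-1}\,s) y_0\right)$ via the chain rule. Writing $z=z(s,t) = x_0 + (t+\sqrt{p-1}\,s) y_0$, one has $\partial_s z = \sqrt{p-1}\,y_0$ and $\partial_t z = y_0$, so
\[
\partial^2_{ss} G = (p-1)\,(\Hess_x F)\, y_0 \cdot y_0, \qquad \partial^2_{tt} G = \partial^2_{tt} F + 2\,\nabla\partial_t F\cdot y_0 + (\Hess_x F)\, y_0 \cdot y_0,
\]
evaluated at $(t,z)$. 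Adding them up gives exactly the quadratic form associated with $H_p F(t,z)$ applied to the vector $(1,y_0) \in \R\times \R^n$:
\[
\Delta G(s,t) = \left\langle H_p F(t,z(s,t)) \begin{pmatrix} 1 \\ y_0 \end{pmatrix}, \begin{pmatrix} 1 \\ y_0 \end{pmatrix}\right\rangle.
\]
The direction second $\Rightarrow$ third is then immediate from $H_p F \geq 0$. For the converse, varying $x_0$ makes $z(s,t)$ range over all of $\R^n$, and varying $y_0$ produces all vectors of the form $(1,y_0)$. This gives non-negativity of the quadratic form on the affine hyperplane where the first coordinate equals $1$; combined with the convexity of $F(t,\cdot)$ in $x$ (which handles vectors $(0,y_0)$) and homogeneity $H_p F(a,b)\cdot(a,b)=a^2 H_p F(1,b/a)\cdot(1,b/a)$ for $a\ne 0$, we conclude $H_p F(t,x)\geq 0$ everywhere, recovering the second bullet.

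No serious obstacle is expected; the main point to keep clean is the role of the factor $\sqrt{p-1}$, which is precisely what converts the $\Hess_x F$ contribution from $\partial^2_{ss} G$ (weight $p-1$) plus the one from $\partial^2_{tt} G$ (weight $1$) into the required weight $p$ in the bottom-right block of $H_p F$. One should also briefly remark that at $p=1$ the $s$-variable disappears from the parametrization and the statement degenerates to the convexity of $F$ on $I\times \R^n$, consistently with \eqref{eq_1042}.
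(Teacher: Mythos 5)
The paper does not actually supply a proof of this proposition; it is attributed to Semmes \cite{Semmes88} and used as a black box, so there is no in-paper argument to compare against. Your proof is correct and is the natural one. The Schur complement step for the first equivalence is indeed the same computation the paper spells out for the case $p=1$ in the discussion surrounding \eqref{eq_1042}. The chain-rule computation of $\Delta G$ is right: with $z(s,t)=x_0+(t+\sqrt{p-1}\,s)y_0$ one gets $\partial^2_{ss}G=(p-1)(\Hess_x F)y_0\cdot y_0$ and $\partial^2_{tt}G=\partial^2_{tt}F+2\nabla_x\partial_t F\cdot y_0+(\Hess_x F)y_0\cdot y_0$, whose sum is precisely the quadratic form of $H_pF(t,z)$ evaluated at $(1,y_0)$, and both directions follow. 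Two small remarks on presentation. In the converse direction you do not actually need to invoke convexity of $F(t,\cdot)$ separately to control vectors $(0,y_0)$: once the quadratic form of $H_pF$ is nonnegative on all $(a,b)$ with $a\neq 0$ (by the scaling you describe), nonnegativity on $(0,b)$ follows by continuity of the form, so the convexity remark is redundant, though harmless. Also, it is worth stating explicitly that for a fixed target point $(t_0,x)$ one should choose $x_0=x-(t_0+\sqrt{p-1}\,s_0)y_0$ and read the subharmonicity inequality at $(s_0,t_0)$; as written, ``varying $x_0$ makes $z$ range over $\R^n$'' is slightly loose since $z$ and $t$ are coupled in the parametrization. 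Neither affects correctness.
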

Note that the third condition in Proposition \ref{prop_1132} needs
only a minimal level of smoothness. We may thus speak of a
sub-family $F$ of $p$-interpolation even when $F$ is not very
smooth.

We turn now to duality, which was part of the  motivation of Semmes.
We shall denote by $\L$ the Legendre transform in space, i.e. on
$\R^n$. In particular, for $F:I\times \R^n$, we shall write
$$\L F(t,x) = \L (F_t)(x) = \sup_{y\in \R^n} \big\{ x\cdot y - F(t,y) \big\}.$$
It is classical that if $F$ is the family of $1$-interpolation given by~\eqref{infconv}, then $\L F$ is a family of $\infty$-interpolation, meaning that $\L F$ is affine in $t$:
$$\L F_t (x) = (1-t)\L F_0(x) + t \L F_1(x).$$
So in this case,  when we move to the dual setting, Brunn-Minkowski or Pr\'ekopa's inequality is replaced by the trivial fact that $\alpha(t)=-\log\int e^{-\mathcal \L_t F(x)}\, dx$ is {\it concave} by  H\"older's inequality.

More general duality relations hold for $p$-interpolations.  Suppose $F(t, x)=F_t(x)$ is convex in $x$, and denote $G(t,y) = \L F_t(y)$. We have
the identity (proved below):
\begin{equation}
\partial^2_{tt} F + \partial^2_{tt} G =  (\Hess_x F)^{-1}
\nabla \partial_t F \cdot \nabla \partial_t F = (\Hess_y
G)^{-1}\nabla
\partial_t G \cdot \nabla \partial_t G, \label{ident}
\end{equation}
where $F$ and its derivatives are evaluated at $(t,x)$,
while $G$ and its derivatives are evaluated at $(t,y)=(t, \nabla F(x))$. From this
identity, we immediately conclude

\begin{prop}
If $F$ is a family of $p$-interpolation, then $\L F$ is a family of $p'$-interpolation, where $\frac1{p'}+\frac1p = 1$.
\end{prop}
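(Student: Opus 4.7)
The claim reduces almost immediately to the identity (\ref{ident}). Indeed, denote
$$A(t,x) := (\Hess_x F)^{-1} \nabla \partial_t F \cdot \nabla \partial_t F,$$
evaluated with the pairing $(t,x)\leftrightarrow(t,y)=(t,\nabla F_t(x))$. If $F$ is a $p$-interpolation, then by Definition \ref{def_1100} we have $\partial^2_{tt} F = A/p$. Substituting into (\ref{ident}) and using the first equality there gives $\partial^2_{tt} G = (1 - 1/p) A = A/p'$. By the second equality in (\ref{ident}), this is exactly $\frac{1}{p'} (\Hess_y G)^{-1} \nabla \partial_t G \cdot \nabla \partial_t G$, so that $G = \mathcal{L} F$ is a $p'$-interpolation. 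So my plan is to prove (\ref{ident}); the proposition then follows by this one-line algebraic manipulation.

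To establish (\ref{ident}), I would parametrize things by the gradient map. Fix $t$ and let $x\mapsto y = \nabla_x F(t,x)$ be the (smooth, invertible by strong convexity) diffeomorphism whose inverse I denote $y \mapsto x(t,y)$. At the maximizer, the Legendre transform reads
$$G(t,y) = x(t,y) \cdot y - F(t, x(t,y)).$$
Differentiating in $t$ and using the envelope-type cancellation (the derivatives with respect to $x$ drop out because $y = \nabla_x F(t,x)$), one gets
$$\partial_t G(t,y) = -\partial_t F(t, x(t,y)).$$
From the defining relation $y = \nabla_x F(t,x(t,y))$, implicit differentiation in $t$ (for fixed $y$) yields $\partial_t x = -(\Hess_x F)^{-1} \nabla \partial_t F$. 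Differentiating $\partial_t G$ once more in $t$ and chaining through this formula gives
$$\partial^2_{tt} G = -\partial^2_{tt} F + (\Hess_x F)^{-1} \nabla \partial_t F \cdot \nabla \partial_t F,$$
which is the first equality of (\ref{ident}).

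For the second equality, I would use that the Legendre transform is an involution on strongly convex functions, so $\mathcal{L} G_t = F_t$ and the same derivation applied with the roles of $F$ and $G$ swapped gives
$$\partial^2_{tt} F + \partial^2_{tt} G = (\Hess_y G)^{-1} \nabla \partial_t G \cdot \nabla \partial_t G.$$
Alternatively, and perhaps more cleanly, I would differentiate the relation $\partial_t G(t,y) = -\partial_t F(t,x(t,y))$ in $y$: since $\nabla_y x(t,y) = (\Hess_x F)^{-1} = \Hess_y G$, the chain rule gives $\nabla \partial_t G = -(\Hess_x F)^{-1} \nabla \partial_t F$, and then a direct substitution into the quadratic form $(\Hess_y G)^{-1} \nabla \partial_t G \cdot \nabla \partial_t G$ collapses to $(\Hess_x F)^{-1} \nabla \partial_t F \cdot \nabla \partial_t F$.

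The only delicate point — I would not call it a real obstacle — is bookkeeping: all quantities on the $F$ side are evaluated at $(t,x)$, all on the $G$ side at $(t,y)$, and these two points are tied by $y=\nabla_x F(t,x)$; confusing fixed $x$ versus fixed $y$ derivatives would derail the computation. Regularity is not an issue here since $F$ is smooth and strongly convex in $x$, so the Legendre transform is smooth and the implicit function theorem applies without qualification.
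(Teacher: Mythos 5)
Your proposal is correct and follows essentially the same route as the paper: reduce the statement to the identity~(\ref{ident}) and then verify that identity by differentiating the Legendre-transform relations $\partial_t G(t,\nabla F_t(x)) = -\partial_t F(t,x)$, $\nabla G_t(\nabla F_t(x))=x$, and $\Hess_y G = (\Hess_x F)^{-1}$. The only cosmetic difference is that you parametrize via the inverse gradient map $y\mapsto x(t,y)$ and use implicit/envelope differentiation, whereas the paper differentiates those three relations directly in $t$ while tracking the point $(t,y)=(t,\nabla F_t(x))$; the computations are the same.
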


We now present the details of the straightforward proof of
(\ref{ident}).
 From the definition,
\begin{eqnarray}
 G(t, \nabla F(t,x)) &=& \langle x, \nabla F(t,x) \rangle - F(t,x),
\label{eq_1043}
\\
 \nabla G_t (\nabla F_t(x)) = (\nabla G)(t, \nabla F(x))& =& x
\label{eq_1039}
\\
\Hess_{y} G(t, \nabla F(x,t)) &=& (\Hess_x F(t, x))^{-1}.
\label{eq_1040}
\end{eqnarray}
where the gradients and the hessians refer only to the space variables  $x,y$. By differentiating (\ref{eq_1039}) with respect to $t$, we see
that
\begin{equation}
\nabla \partial_t{G} = -(\Hess_y G) (\nabla \partial_t F)
\label{eq_1037}
\end{equation}
where $G$ and its derivatives are evaluated at $(t,y)=(t, \nabla
F(x))$, while $F$ and its derivatives are evaluated at $(t,x)$. From
(\ref{eq_1040}) and (\ref{eq_1037}),
\begin{equation}\label{dualformula0}
-\nabla \partial_t G \cdot \nabla\partial_t F= (\Hess_x F)^{-1} \nabla \partial_t F \cdot \nabla \partial_t F =  (\Hess_y G)^{-1} \nabla \partial_t G \cdot \nabla \partial_t G .
\end{equation}
 Differentiating
(\ref{eq_1043}) with respect to $t$ and using (\ref{eq_1039}) we get that
$
\partial_t G (t, \nabla F(x))= -\partial_t F(t,x)$.
If we differentiate this last equality one more time with respect to $t$, we find
$$
\partial^2_{tt}G + \nabla \partial_t G \cdot \nabla \partial_t F = -\partial^{2}_{tt}
F,
$$
which combined with~\eqref{dualformula0} yields the desired formula
(\ref{ident}).

\medskip
 As a consequence of Proposition \ref{prop_1132}, we see that $2$-interpolation families
satisfy an interpolation duality theorem. Let $f$ be a convex
function on $\R^n$, and suppose that $F_t(x) = F(t,x)$ is the
 $2$-interpolation
 family $F$ with $F_0 = f$ and $F_1 = \L f$. Then, $$ F(t,x) = \L F(1-t,x) $$
  provided we have unicity for the $2$-interpolation problem,
and therefore we have
$$F \left(\frac12, x \right) = \frac{|x|^2}{2}. $$ If we take $f(x)=\|x\|_K^2/2$, then $\L
f(x)=\|x\|_{K^\circ}^2/2$. Thus, if we could prove that  for a
$2$-interpolation family $F$, the associated function $\alpha$
from~\eqref{defalpha} is convex, as it is for $1$-interpolations,
then we would  recover Santal\'o's inequality. This would be the
case  if we had a Brascamp-Lieb inequality with a factor $\ffrac12$
on the right-hand side of~\eqref{BL} for every convex function
$F:\R^n\to \R$. However, this is  of course false in general. Recall
that even for the Santal\'o inequaliy, some ``center'' must be fixed
or some symmetry must be assumed. Therefore, a more reasonable
question to ask, is whether $\alpha$ is convex when the initial data
$f$ is even. This guarantees that $F_t$ is even for all $t\in
[0,1]$. However, it is again false in general that the Brascamp-Lieb
inequality holds with factor $\ffrac12$ in the right-hand side
of~\eqref{BL} when $F$ and $u$ are even, as  can be shown by taking
a perturbation of the Gaussian measure. This suggests that the
answer to the question could be negative in general. A reasonable
conjecture, perhaps, is:

\begin{conj}
Assume $F_0$ and $F_1$ are even, convex and $2$-homogeneous (i.e.
$F_i(x) = \lambda_i \|x\|_{K_i}^2$ for some centrally-symmetric
convex bodies $K_i\subset \R^n$), properties that propagate along
the interpolation. Then, the function $\alpha$ associated with the
$2$-interpolation family is convex.
\end{conj}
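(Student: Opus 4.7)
The strategy is to combine the identity~\eqref{formula} with the defining equation of $2$-interpolation, $\partial^2_{tt} F = \tfrac{1}{2}(\Hess_x F)^{-1} \nabla \partial_t F \cdot \nabla \partial_t F$, so that $\alpha''(t)\ge 0$ becomes equivalent to the strengthened Brascamp--Lieb inequality
$$
\mathrm{Var}_{\mu_{F_t}}(u) \;\le\; \tfrac{1}{2}\int_{\R^n} (\Hess_x F_t)^{-1} \nabla u \cdot \nabla u \, d\mu_{F_t}, \qquad u := \partial_t F(t,\cdot).
$$
This is the factor-$\tfrac{1}{2}$ sharpening of~\eqref{BL} discussed after the conjecture; since it fails for generic even data, the $2$-homogeneity of $F_t$ --- and hence of $u$ --- must be used essentially.

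The plan is to exploit Euler's identity: for every $2$-homogeneous $\varphi$ one has $x\cdot\nabla\varphi=2\varphi$, and in particular $(\Hess_x F_t)^{-1}\nabla F_t = x$. Decomposing $u = \lambda F_t + w$ with $\lambda$ chosen to make $w$ covariance-orthogonal to $F_t$ in $L^2(\mu_{F_t})$, the polar identity
$$
\int_{\R^n} g(x)\, e^{-F_t(x)}\, dx \;=\; \tfrac{1}{2}\,\Gamma\!\left(\tfrac{n+k}{2}\right)\int_{S^{n-1}} g(\theta)\, F_t(\theta)^{-(n+k)/2}\, d\theta
$$
(valid for every $k$-homogeneous $g$) yields $\mathrm{Var}_{\mu_{F_t}}(F_t) = n/2 = \tfrac{1}{2}\int (\Hess_x F_t)^{-1}\nabla F_t\cdot\nabla F_t\, d\mu_{F_t}$, so $F_t$ itself saturates the target inequality. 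Integration by parts together with $x\cdot\nabla F_t = 2F_t$ shows that the cross-term vanishes as soon as $\mathrm{Cov}(w,F_t)=0$ (which, combined with the $2$-homogeneity of $w$, also forces $\int w\, d\mu_{F_t}=0$). The conjecture thus reduces to establishing the same factor-$\tfrac{1}{2}$ inequality for every even $2$-homogeneous $w$ that is $L^2(\mu_{F_t})$-orthogonal to both $1$ and $F_t$.

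This reduced problem descends to the sphere: setting $W(\theta) = w(\theta)/F_t(\theta)$ for $\theta \in S^{n-1}$ (a well-defined function by $2$-homogeneity of $w$ and $F_t$), the same polar identity rewrites both sides as integrals involving $W$ against the probability measure on $S^{n-1}$ with density proportional to $F_t(\theta)^{-n/2}$. The resulting statement is a Poincar\'e-type spectral bound on $S^{n-1}$ for a weighted drift operator, subject to evenness of $W$ and one linear constraint. I would attack it by a H\"ormander-style $L^2$ integration by parts on the sphere, in direct analogy with the proof of~\eqref{BL} reproduced in the excerpt; the Bochner identity on $S^{n-1}$ produces the positive Ricci contribution $(n-2)\,|\nabla_{S^{n-1}}W|^2$ alongside a spherical Hessian of $\log F_t$ restricted to $S^{n-1}$. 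The main obstacle, which I expect to be genuinely delicate, is that this combination must deliver the \emph{sharp} factor $\tfrac{1}{2}$: the conjectural status of the statement suggests that a naive Bochner argument does not suffice, and that what is really needed is a Busemann-type log-concavity of $\theta \mapsto F_t(\theta)^{-n/2}$ along great circles. This is precisely the kind of inequality the concluding section of the paper addresses at a local $L^2$ level, and I expect a complete proof to intertwine the reduction above with such a local Busemann estimate.
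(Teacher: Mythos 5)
The statement you attempt to prove is, as its label indicates, a \emph{conjecture} in the paper; the authors give no proof and explicitly treat it as open. They observe that the naive route -- a factor-$\frac12$ Brascamp--Lieb bound \eqref{BL} for every even convex $F$ and even $u$ -- fails, and they then establish only the ``much more modest result'' that $\alpha''(1/2)\ge 0$ when $F_0=f$ and $F_1=\L f$, exploiting that in this self-dual case $F_{1/2}(x)=|x|^2/2$, so $\mu_{F_{1/2}}$ is Gaussian and the Hermite/Ornstein--Uhlenbeck spectrum is explicit. Your proposal should therefore not be measured against a proof in the paper; it is a partial attack on an open problem, which is also how you honestly present it.

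On its own merits, your reduction is correct and goes beyond what the paper records. The polar-coordinate computation showing that $F_t$ itself saturates the factor-$\frac12$ inequality, namely $\mathrm{Var}_{\mu_{F_t}}(F_t)=n/2=\frac12\int(\Hess_x F_t)^{-1}\nabla F_t\cdot\nabla F_t\, d\mu_{F_t}$, checks out, as does the observation that for a $2$-homogeneous $w$ one has $\mathrm{Cov}_{\mu_{F_t}}(w,F_t)=\int w\, d\mu_{F_t}$, so the two apparent orthogonality constraints collapse to a single one and the cross-terms on both sides of the inequality vanish simultaneously once $\mathrm{Cov}(w,F_t)=0$. The descent to a weighted spectral inequality on $S^{n-1}$ is likewise sound. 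In the self-dual case your reduction reproduces exactly the mechanism of the paper's Fact: the saturating direction $F_{1/2}=|x|^2/2$ lies in the second Hermite eigenspace, and the remainder $w$ lives where $-L\ge 2$. But for a general $2$-homogeneous $F_t$ you are left with a spherical Poincar\'e-type bound whose sharp constant $\frac12$ is precisely what the conjecture asserts, and neither the Bochner/Lichnerowicz route you sketch (the Ricci contribution $n-2$ has no evident reason to combine with the spherical Hessian of $\log F_t$ to produce exactly this constant) nor a vague appeal to Busemann-type log-concavity of $\theta\mapsto F_t(\theta)^{-n/2}$ closes the gap. So the proposal is a useful reformulation that isolates the remaining difficulty, but the genuinely hard step -- the sharp spherical estimate -- is left open, as you acknowledge, and the conjecture remains unproved both in the paper and in your proposal.
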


Here is a much more modest result:

\begin{fact}
Assume that $f$ is convex and even, and let $F$ be a
$2$-interpolation family with $F_0=f$ and $F_1=\L f$, with the
 associated function $\alpha$ as in~\eqref{defalpha}. Then, one has
$$\alpha''\left(1/2 \right) \ge 0.$$
\end{fact}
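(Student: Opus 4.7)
The plan is to evaluate $\alpha''(1/2)$ directly, exploiting that the midpoint of the interpolation is self-dual, and then to invoke a spectral-gap improvement for even functions on Gaussian space. By the duality relation $F(t,x) = \mathcal{L} F(1-t,x)$ (with the uniqueness caveat) noted just above the Fact, the function $F(1/2,\cdot)$ coincides with its own Legendre transform, and hence equals $|x|^2/2$. In particular, $\mu_{F_{1/2}}$ is the standard Gaussian measure $\gamma$ on $\R^n$, and $\Hess_x F(1/2,\cdot) = \id$.

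With this in hand, the $2$-interpolation identity~\eqref{kinterpol} at $t = 1/2$ collapses to
$$ \partial^2_{tt} F(1/2,x) \;=\; \tfrac12 \, |\nabla \partial_t F(1/2,x)|^2, $$
so substituting into the formula~\eqref{formula} yields
$$ \alpha''(1/2) \;=\; \tfrac12 \int_{\R^n} |\nabla u|^2\, d\gamma \;-\; {\rm Var}_{\gamma}(u), \qquad u := \partial_t F(1/2,\cdot). $$
The statement therefore reduces to showing that $u$ satisfies the Gaussian Poincar\'e inequality with the improved constant $\tfrac12$ in place of the usual $1$.

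This is where evenness enters. Since both $F_0 = f$ and $F_1 = \mathcal{L} f$ are even, uniqueness of the $2$-interpolation forces $F(t,\cdot)$ to be even for every $t \in [0,1]$, and in particular $u$ is even. The required improved Poincar\'e inequality,
$$ {\rm Var}_{\gamma}(u) \;\le\; \tfrac12 \int_{\R^n} |\nabla u|^2\, d\gamma \qquad (u \in H^1(\gamma)\ \text{even}), $$
is a standard consequence of the Hermite decomposition of $L^2(\gamma)$: the Ornstein--Uhlenbeck operator has eigenvalues $0, 1, 2, \ldots$ with tensor Hermite polynomials as eigenfunctions, and every Hermite polynomial of odd total degree is an odd function of $x$, so on the even subspace the first nonzero eigenvalue jumps from $1$ to $2$. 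Combining the two displays gives $\alpha''(1/2) \ge 0$.

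The real difficulty here is not analytic but foundational: the whole argument pivots on the identity $F(1/2,x) = |x|^2/2$, which presupposes existence, uniqueness, and the preservation of even symmetry by the degenerate Monge-Amp\`ere problem associated with $2$-interpolation---issues the authors themselves flag as beyond the scope of this note. Granted this, the proof is a clean marriage of self-duality at the midpoint with the spectral-gap improvement on the even subspace of $L^2(\gamma)$, in complete parallel with how $1$-interpolation reduces Pr\'ekopa to Brascamp-Lieb.
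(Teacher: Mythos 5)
Your proof is correct and follows essentially the same route as the paper: identify $F(1/2,\cdot)=|x|^2/2$ so that $\mu_{F_{1/2}}=\gamma$, reduce via formula~\eqref{formula} and the $2$-interpolation identity to the improved Gaussian Poincar\'e inequality ${\rm Var}_\gamma(u)\le\frac12\int|\nabla u|^2\,d\gamma$ for the even function $u=\partial_t F(1/2,\cdot)$, and prove that inequality by noting that on the even subspace of $L^2(\gamma)$ the Hermite decomposition kills the degree-one eigenspace, so the Ornstein--Uhlenbeck spectral gap jumps from $1$ to $2$. Your remarks on the foundational caveats (existence, uniqueness, preservation of evenness) match the authors' own disclaimers and do not alter the substance.
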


\begin{proof}
Since $F(\frac12,x)=|x|^2/2$, the probability measure $\mu_{F_{1/2}}$ is exactly the Gaussian measure on $\R^n$, which we denote by $\gamma$.
Note also that $\Hess_x F_{1/2}= \id_{\R^n}$. Therefore, if we denote $u=\partial_t F(\frac12, \cdot)$, we need to check that
$$\textrm{Var}_{\gamma}(u) \le  \frac12 \int_{\R^n}  |\nabla u|^2 \, d\gamma.$$
The function $v:=u- \int u\, d\gamma$ is by construction orthogonal
 to constant functions in $L^2(\gamma) $. But since $u$ is even
(because $F_t$ is even for all $t$, and so is $\partial_t F$), this
function $v$  is also orthogonal to linear functions. Recall that
the Hermite (or Ornstein-Uhlenbeck) operator  $L=\Delta - x\cdot
\nabla $ has non-positive integers as eigenvalues, and that the
eigenspaces (generated by Hermite polynomials) associated with the
eigenvalues $0$ and $-1$ are formed by the constant and linear
functions. Therefore, $v$ belongs to the subspace where $-L\ge 2\,
\id$ and so
$$\textrm{Var}_{\gamma}(u) = \int |v|^2 \, d\gamma \le - \frac12 \int v Lv \, d\gamma = \frac12 \int |\nabla u |^2 \, d\gamma.$$
 \end{proof}

We conclude this section by mentioning that we have analogous
formulas in the case where we work with some fixed measure $\nu$ on
$\R^n$, in place of the Lebesgue measure. Then,  for a function
$F:\R^n\to \R$ such that $\int e^{-F}\, d\nu<+\infty$, we denote by
$\mu_{\nu,F}$ the probability measure  on $\R^n$ given  by
$$d\mu_{\nu,F}(x):= \frac{\D e^{-F(x)}}{\int e^{-F}\, d\nu} \, d\nu(x).$$
For a function of $n+1$ variables  $F:I\times \R^n \to \R$,  we
denote as before   $F_t:=F(t,\cdot):\R^n\to \R$ and then $\mu_{\nu,
F_t}$ is the corresponding probability measure on $\R^n$.  We are
then interested in the convexity of the function
$$\alpha_{\nu}(t) := -\log \int_{\R^n} e^{-F(t,x)}\, d\nu(x) = - \log\int_{\R^n} e^{-F_t}\, d\nu.$$
The computation is identical:
$$\alpha_\nu''(t)=  \int_{\R^n} \partial^2_{tt} F \, d\mu_{\nu, F_t} - {\rm Var}_{\mu_{\nu, F_t}} \big( \partial_t F\big) .$$
Here is an illustration. Let $\nu$ be a symmetric log-concave
measure on $\R^n$: $d\nu(x)=e^{-W(w)}\, dx$ with $W$ being convex
and even on $\R^n$, and consider the family
$$F(t,x) =e^{t}\, |x|^2/2.$$
This is a typical example of a $2$-interpolation family. Then, the
fact that the corresponding $\alpha_\nu$ is convex is equivalent to
the $B$-conjecture proved in~\cite{CFM}. The argument there begins
with the computation above.  It turns out that for this particular
family $F$, the required Brascamp-Lieb inequality reduces to a
Poincar\'e inequality for the measure $\mu_{\nu,F_t}$, which holds
precisely with a constant $1/2$ when restricted to even functions.

Let us also mention in this direction that the Santal\'o inequality
in its functional form~\eqref{functsantalo} also holds if  the
Lebesgue measure is, in the three integrals, replaced by an even
log-concave measure of $\R^n$, as noted in Klartag~\cite{Klartag}.
Several examples of this type suggest that  the Lebesgue measure can
often be replaced by a more general log-concave measure.

\section{The Busemann Inequality}

We conclude this survey with a proof of the Busemann inequality via
$L^2$ inequalities. The Busemann inequality \cite{Bus} is concerned
with non-parallel hyperplane sections of a convex body $K \subset
\RR^n$. In the particular case where $K$ is centrally-symmetric, the
Busemann inequality states that
$$ g(x) = \frac{|x|}{|K \cap x^{\perp}|} \quad \quad \quad \quad (x
\in \RR^n) $$ is a norm on $\RR^n$. Here $|K \cap x^{\perp}|$ is the
$(n-1)$-dimensional volume of the hyperplane section $K \cap
x^{\perp} = \{ y \in K ; y \cdot x = 0 \}$, and $g(0) = 0$ as
interpreted by continuity.  The convexity of the function $g$ is a
non-trivial fact.  Using the Brunn-Minkowski inequality, the
convexity of $g$ reduces to a statement about log-concave functions
in the plane, as observed by Busemann. Indeed, the convexity of $g$
has to be checked along affine lines, and therefore on
$2$-dimensional vector subspaces. Specifically, let $E \subset
\RR^n$ be a two-dimensional plane, which we conveniently identify
with $\RR^2$. For $y \in \RR^2 = E$ set
$$ e^{-w(y)} = |K \cap (y + E^{\perp})|, $$
the $(n-2)$-dimensional volume of the the section of $K$. Then $w:
\RR^2 \rightarrow \RR \cup \{ +\infty \} $ is a convex function,
according to the Brunn-Minkowski inequality. For $p > 0$ and $t \in
\RR$ define
\begin{equation} \alpha_p(t) = \int_0^{\infty} e^{-w(t s,s)} s^{p-1} ds.
\label{eq_622} \end{equation} Note that when $K$ is
centrally-symmetric, $2 \sqrt{1 + t^2} \alpha_1(t) = |K \cap
(1,-t)^{\perp}|$. We therefore see that Busemann's inequality
amounts to the convexity of the function $1/\alpha_1(t)$ on $\RR$.
Next we will prove  the following  more general statement, which is
due to Ball \cite{Ball_studia} when $p \geq 1$:
\begin{theo} Let $X$ be an $n$-dimensional real linear space and
let $w: X \rightarrow \RR$ be a convex function with $\int
 e^{-w} < \infty$. For $p > 0$ and $0 \neq x \in X$ denote
$$ h(x) = \left( \int_0^{\infty} e^{-w(s x)} s^{p-1} ds \right)^{-1/p} $$
with $h(0) =0$. Then $h$ is a convex function on $X$.
\label{busemann}
\end{theo}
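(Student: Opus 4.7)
Since $h$ is positively $1$-homogeneous (from $s \mapsto s/\lambda$ in the defining integral) and continuous after a standard smoothing of $w$, convexity of $h$ on $X$ reduces to convexity of $\beta(t) := h(x_t)$ along each segment $x_t = (1-t)x_0 + tx_1$. Setting $\phi(t) = \beta(t)^{-p} = \int_0^\infty e^{-w(sx_t)} s^{p-1}\, ds$, a direct computation gives $\beta''(t) \ge 0 \iff p\,\phi(t)\phi''(t) \le (p+1)\phi'(t)^2$. Writing $\phi(t) = \int_0^\infty e^{-F(t,s)}ds$ with $F(t,s) := w(sx_t) - (p-1)\log s$ and applying Fact~1 to $\alpha = -\log\phi$ with $d\mu_t \propto e^{-F(t,\cdot)} ds$, $y := x_1 - x_0$ and $u(s) := \nabla w(sx_t) \cdot y$ (so $\partial_t F = su$, $\partial^2_{tt} F = s^2\langle \Hess w(sx_t)\, y, y\rangle$), the desired convexity translates into the spectral inequality
$$\mathrm{Var}_{\mu_t}(su)\ \le\ \int_0^\infty s^2\langle \Hess w(sx_t)\, y, y\rangle\, d\mu_t\ +\ \frac{1}{p}\Big(\int_0^\infty su\, d\mu_t\Big)^2.$$

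To prove this I would apply a one-dimensional Brascamp--Lieb inequality to $\mu_t$, mimicking the local $L^2$ strategy used elsewhere in the paper. In the variable $\sigma = \log s$ the pushed-forward measure has potential $\widetilde V(\sigma) = w(e^\sigma x_t) - p\sigma$ satisfying the clean identity $\widetilde V''(\sigma) = \psi(s) + s^2 \langle \Hess w(sx_t)\, x_t, x_t\rangle$, where $\psi(s) := s\, x_t \cdot \nabla w(sx_t)$ has the integration-by-parts property $\int \psi\, d\mu_t = p$ (a direct consequence of the $s^{p-1}$ weight). Brascamp--Lieb applied to $f(\sigma) = su(s)$, combined with the matrix Cauchy--Schwarz $\langle \Hess w\, x_t, y\rangle^2 \le \langle \Hess w\, x_t, x_t\rangle\langle \Hess w\, y, y\rangle$ and a completion-of-the-square manipulation, produces the $\int s^2\langle \Hess w\, y, y\rangle\, d\mu_t$ term on the right-hand side.

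The main obstacle is obtaining the correction $\frac{1}{p}(\int su\, d\mu_t)^2$ with the sharp constant $1/p$: a naive Brascamp--Lieb application to $f = su$ yields instead an error of the form $\int s u^2/(x_t \cdot \nabla w(sx_t))\, d\mu_t$, which by Cauchy--Schwarz is only bounded below by $\frac{1}{p}(\int su\, d\mu_t)^2$, so the direct bound is strictly weaker than what is needed. One must therefore apply Brascamp--Lieb to a modified test function such as $su - c\psi$, tuning $c$ so that the slack in Cauchy--Schwarz telescopes against the identity $\int \psi\, d\mu_t = p$. A further subtlety is that the statement covers the full range $p > 0$, whereas Ball's original argument requires $p \ge 1$; for $0 < p < 1$ the potential $\widetilde V$ may fail to be convex, blocking the direct use of 1D Brascamp--Lieb, and one likely needs an auxiliary argument exploiting the convexity of $(u,v) \mapsto w(u x_0 + v x_1)$ on $(\RR^+)^2$ together with a Borell-type $(-1/p)$-concavity principle for marginals.
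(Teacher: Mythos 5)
Your reduction is essentially the paper's: you correctly translate convexity of $h$ along a segment into the spectral inequality
$$\mathrm{Var}_{\mu}(su)\ \le\ \int_0^\infty s^2\langle \Hess w\, y, y\rangle\, d\mu\ +\ \frac{1}{p}\Big(\int_0^\infty su\, d\mu\Big)^2,$$
which is exactly inequality (\ref{eq_653}) in the paper, and you further identify the matrix Cauchy--Schwarz step $\partial^2_{tt}w \geq (\partial^2_{tx}w)^2/\partial^2_{xx}w$ that reduces it to a one-variable statement about the measure $\mu \propto e^{-w(0,\cdot)} s^{p-1}\,ds$ on $[0,\infty)$. Up to here you are on the same track as the paper.

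The gap is that you never actually prove this spectral inequality, and the strategy you outline is not the one that works. You propose passing to $\sigma = \log s$ and applying 1D Brascamp--Lieb to the pushed-forward measure. You then note, correctly, that the naive application produces an extra term with the wrong sign of comparison to $\frac{1}{p}(\int su\,d\mu)^2$, and you speculate that a modified test function $su - c\psi$ might rescue the argument, without carrying this out. You also attribute the possible failure of convexity of $\widetilde V$ to the range $0<p<1$, but this is not quite right: the correction $-p\sigma$ is linear, so convexity of $\widetilde V(\sigma) = w(e^\sigma x_t) - p\sigma$ is independent of $p$; what does depend on $p$ is the convexity of $w(0,s) - (p-1)\log s$ in the original variable $s$ (convex iff $p\ge 1$). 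The paper avoids both issues by staying in the $s$-variable and running a Hörmander-type duality argument directly: it introduces the generator $L\varphi = \varphi'' - (\partial_s w(0,s) - \frac{p-1}{s})\varphi'$, solves $L\varphi = u - \int u\,d\mu$, integrates by parts to get $\int(L\varphi)^2\,d\mu = \int(\varphi'')^2\,d\mu + \int(\partial^2_{ss}w + \frac{p-1}{s^2})(\varphi')^2\,d\mu$, and then performs three successive completions of squares; crucially, after the second completion the coefficient in front of $(\varphi')^2/s^2$ becomes $p>0$, so the argument works uniformly for all $p>0$ even though $\frac{p-1}{s^2}$ has an indefinite sign for $p<1$. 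Without some such mechanism, your sketch does not close, and the proposal should be regarded as an incomplete reduction rather than a proof.
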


Busemann's proof of the case $p=1$ of Theorem \ref{busemann}, and
the generalization to $p \geq 1$ by Ball, rely on transportation of
measure in one dimension. The proof we present below may be viewed
as an infinitesimal version of Busemann's transportation argument.
This is reminiscent of the proof given in Ball, Barthe and Naor
\cite{Ball:2003p54} of the Pr\'ekopa inequality, which may be viewed
as an infinitesimal version of the transportation proof of the
latter inequality.

\medskip {\it Proof of Theorem \ref{busemann}:} By a standard approximation
argument, we may assume that $w$ is smooth and $1/R \leq \Hess(w)
\leq R$ at all points of $\R^n$, for some large constant $R > 1$.
Therefore $h$ is a continuous function, smooth outside the origin,
and homogeneous of degree one. Since convexity of a function
involves three collinear points contained in a two-dimensional
subspace, we may assume that $n=2$. Thus, selecting a point $0 \neq
z \in X$ and a direction $\theta \in X$, our goal is to show that
$\partial^2_{\theta \theta} h(z) \geq 0$ (since $h$ is homogeneous
of degree one, it suffices to consider the case $z \neq 0$). If
$\theta$ is proportional to $z$, then the second derivative vanishes
as $h$ is homogeneous of degree one. We may therefore select
coordinates $(t,x) \in \R^2 = X$, and identify $z = (0,1)$ and
$\theta = (1,0)$. With this identification, in order to prove the
theorem we need to show that
$$ \left( \alpha_p^{-1/p} \right)^{\prime \prime}(0) \geq 0, $$
where $\alpha_p$ is defined in (\ref{eq_622}). Equivalently, we need
to prove that at the origin,
\begin{equation}
 \partial^2_{tt} \alpha_p \leq \left(1 + \frac{1}{p} \right) \left( \partial_t \alpha_p \right)^2 / \alpha_p.
 \label{eq_624}
 \end{equation}
 We denote by $\mu$
the probability measure on $[0, \infty)$ whose density is
proportional to the integrable function $\exp(-w(0, x)) x^{p-1}$.
Similarly to Fact \ref{formula} above with $F(t,x)=w(tx,x)$, the desired inequality
(\ref{eq_624}) is equivalent to
\begin{equation}   {\rm Var}_{\mu}(x \partial_t w) \leq \int_0^{\infty} x^2
(\partial^2_{tt} w) d\mu(x) + \frac{1}{p} \left( \int_0^{\infty} x
(\partial_t w) d \mu(x) \right)^2.  \label{eq_653} \end{equation} We
will use the convexity of $w(t,x)$ via the inequality
$\partial^2_{tt} w \geq \left( \partial^2_{tx} w \right)^2 /
\partial^2_{xx} w$, which expresses the fact that $w_t(x) = w(t,x)$
is a sub-family of $1$-interpolation. Denote $u(x) = x \partial_t
w(0, x)$ and compute that $x \partial^2_{tx} w = u^{\prime} - u(x) /
x$ for $x > 0$. Hence, in order to prove (\ref{eq_653}), it suffices
to show that
\begin{equation}   {\rm Var}_{\mu}(u) \leq \int_0^{\infty} \frac{1}{\partial^2_{xx} w} \left(
u^{\prime}(x) - \frac{u(x)}{x} \right)^2 d\mu(x) + \frac{1}{p}
\left( \int_0^{\infty} u d \mu(x) \right)^2. \label{eq_654}
\end{equation}
We will prove (\ref{eq_654}) for any smooth function $u \in
L^2(\mu)$ (it is clear that the function $x \partial_t w(0,x)$ grows
at most polynomially at infinity, and hence belongs to $L^2(\mu)$).
By approximation, it suffices to restrict our attention to smooth
functions such that $u - \int u d \mu$ is compactly-supported in
$[0, \infty)$.  Consider the Laplace-type operator
$$L \vphi =
\vphi^{\prime \prime} -  \Big( \partial_x w(0,x) - \frac{p-1}{x}
\Big) \vphi^{\prime}= \vphi^{\prime \prime} -  \partial_x\Big(  w(0,x) - (p-1)\log(x)
\Big) \vphi^{\prime}.$$
 Integrating the ordinary differential
equation, we find a smooth function $\vphi$, with $\vphi^{\prime}(0)
= 0$ and $\vphi^{\prime}$ compactly-supported in $[0, \infty)$, such
that $L \vphi = u - \int u d \mu$. As before, we have the
integration by parts $\int (L\varphi) u \, d\mu = - \int \varphi' u' \, d\mu$ and
$$ \int_0^{\infty} (L \vphi)^2 d \mu = -\int_0^{\infty} \vphi^{\prime}(x) u^{\prime}(x) d \mu =\int_0^{\infty} (\vphi^{\prime \prime}(x) )^2 d
\mu + \int_0^{\infty} \left( \partial^2_{xx} w + \frac{p-1}{x^2}
\right)(\vphi^{\prime}(x) )^2 d \mu. $$ Let us abbreviate $w^{\prime
\prime} = \partial^2_{xx} w(0,x), E = \int u d \mu$ and also
$\langle f \rangle = \int_0^{\infty} f(x) d \mu(x)$. Then, by using
the above identities and by completing three squares (marked by wavy
 underline),
\begin{align*}  {\rm Var}_{\mu}(u) & =  -2 \langle u' \varphi'\rangle - \langle (L\varphi)^2 \rangle \\
& = \uwave{\left \langle -2
\vphi^{\prime} \left( u^{\prime} - \frac{u}{x} \right) \right
\rangle} -\left \langle \frac{2 \vphi^{\prime} u}{x}  \right \rangle
- \left \langle (\vphi^{\prime \prime})^2 + \uwave{w^{\prime \prime}
(\vphi^{\prime})^2} + \frac{p-1}{x^2} (\vphi^{\prime})^2 \right
\rangle \\ & \leq \uwave{\left \langle \frac{1}{w^{\prime \prime}}
\left( u^{\prime} - \frac{u}{x} \right)^2 \right \rangle} -2 \left
\langle \frac{ \vphi^{\prime} (L \vphi + E)}{x}  \right \rangle -
\left \langle (\vphi^{\prime \prime})^2 + \frac{p-1}{x^2}
(\vphi^{\prime})^2\right \rangle \\  &= \left \langle
\frac{1}{w^{\prime \prime}} \left( u^{\prime} - \frac{u}{x}
\right)^2 \right \rangle +  \uwave{ \left \langle2 \vphi^{\prime
\prime} \vphi^{\prime} / x  \right \rangle }-
\left \langle \frac{2 \vphi^{\prime} E}{x} + \uwave{(\vphi^{\prime \prime})^2} +  (p+1) \frac{(\vphi^{\prime})^2}{x^2} \right \rangle \\
& \leq \left \langle \frac{1}{w^{\prime \prime}} \left( u^{\prime} -
\frac{u}{x} \right)^2 \right \rangle - \left \langle \frac{2
\vphi^{\prime} E}{x} + \uwave{p\frac{(\vphi^{\prime})^2}{x^2}
}\right \rangle \leq \left \langle \frac{1}{w^{\prime \prime}}
\left( u^{\prime} - \frac{u}{x} \right)^2 \right \rangle +
\frac{E^2}{p},
\end{align*}
and (\ref{eq_654}) is proven.  \hfill $\square$


\end{document}